\newtheorem{theorem}{Theorem}[section]
\newtheorem{lemma}[theorem]{Lemma}
\newtheorem{proposition}[theorem]{Proposition}
\newtheorem{remark}[theorem]{Remark}
\newtheorem{definition}[theorem]{Definition}
\newcommand{\R}{\mathbb{R}}
\newcommand{\N}{\mathbb{N}}
\newcommand{\di}{\, \mathrm{d}}
\newcommand{\csch}{\mathrm{csch}}
\begin{document}
	
\title[Title]{On the semilinear heat equation with the Grushin operator}

\author[G. Oliveira]{Geronimo Oliveira}
\address{Universidade Federal de Sergipe, Programa de Pós-graduação em Matemática de Matem\'atica, 49100-000 São Cristóvão-SE, Brazil}
\email{geronimofisica@academico.ufs.br}\thanks{The first author was supported by Capes/Brasil}

\author[A. Viana]{Arlúcio Viana}
\address{Universidade Federal de Sergipe, Departamento de Matem\'atica, 49100-000 São Cristóvão-SE, Brazil}
\email{arlucioviana@academico.ufs.br}\thanks{*}

\keywords{Subelliptic operators, well-posedness of PDEs, Grushin operator, semigroups of operators, blow-up alternative}

\subjclass[2020]{35H20, 35B44, 47D06, 35K58 ,35B60, 35B09}

\begin{abstract} 
	
	In this work, we study the heat equation with Grushin's operator. We present an expression for its heat kernel, prove its decay in $L^p$ spaces, and that it is a approximation of the identity. As a consequence, the heat semigroup associated to Grushin's operator by means of this heat kernel is estimated in Lebesgue spaces. Next, we use the results to prove the existence, uniqueness, continuous dependence and blowup alternative of mild solutions of a nonlinear Cauchy's problem associated to Grushin's operator. A global existence result is also presented.
	
\end{abstract}

\maketitle

\section{Introduction}

\hspace{12pt} The celebrated heat equation is given by
\begin{equation}\label{heat}
	u_t = \Delta u , \ \ \ (x,t)\in \Omega\times(0,T) \subset \R^N\times(0, \infty),
\end{equation} 
where $\Delta$ denotes the Laplacian in the variable $x$ and $\Omega$ is a domain. It is used to model various physical, chemical and biological phenomena. It is well-known that Fourier law is used to deduce \eqref{heat}, that is, the flux is proportional to $-\nabla u$. Several variants of this equation were introduced in order to obtain more realistic models. A way to get variants that may approach certain diffusion problems is allowing other possibilities to the heat flux. Thus, other operators may replace the Laplacian.

Sub-elliptic operators are of great scientific interest since the work by Chow \cite{Chow-39}. They are naturally connected with diffusion problems where the flux does not flow as in the direction of the standard gradient. In these cases, a new heat equation appears and the new heat kernel needs to be found and studied. we cite some works that treat the heat kernel of those type of operators \cite{Gaveau-77, Arous-89, Chang-Li-15, Ba-Furutani-15}. We can find a good survey of techniques to find the heat kernel of sub-elliptic operators in Callin's \textit{et al} monograph \cite{Calin-11}. Other works on sub-elliptic operators are \cite{Ar-B-08,Calin-11, Chang-Li-12,Goldstein-Kogoj, Kogoj-Lan-12,Wu-15}.

%Examples of such operators are the Heisenberg sub-Laplacian, Kohn Laplacians, Grushin operator. Each one of them carries its own geometric properties that highly influence its analysis. See e.g. \cite{Ar-B-08,Calin-11, Chang-Li-12,Goldstein-Kogoj, Kogoj-Lan-12,Wu-15}. 

The Grushin operator appeared in Grushin's paper \cite{Grushin} and is defined by 
\begin{equation}\label{Grushiop}
	\Delta_{\mathcal{G}}=\dfrac{1}{2}\left(\partial_{x}^2+x^2\partial_{y}^2\right).
\end{equation}

A generalization of \eqref{Grushiop} is given by
\begin{equation}\label{Grushin2}
	\Delta_\gamma u (z) := \Delta_x u(z) + |x|^{2\gamma}\Delta_y u(z),
\end{equation}
with $\gamma>0$, $z=(x,y) \in \mathbb{R}^m\times \mathbb{R}^k$, $m+k = N\geq3$, $\Delta_x$ and $\Delta_y$ are the Laplace operators in the variables $x$ and $y$. In \cite{DAmb}, for example, the author proves nice Hardy Inequalities related to generalized Grushin operator \eqref{Grushin2}. When $k=\gamma=1$,  the kernel is similar to the heat kernel of \eqref{Grushiop}, replacing $x$ with $|x|$. See \eqref{HK} and \cite{Chang-Li-15}. The heat kernel for \eqref{Grushin2}, when $k>1$ and $\gamma=1$, appeared recently in \cite{Garofalo-Trallli-22}.

In parallel of the interest for the solutions of the linear equations mentioned above, there is a huge attention on the following Cauchy problem and its variants:
\begin{eqnarray}\label{sheat}
	u_t(x,t) = \Delta u(x,t) + |u(x,t)|^{\rho-1}u(x,t),\  \mbox{in}\ (0,\infty)\times \R^N ,\\
	u(x,0)=u_0(x),\ \mbox{in}\ \R^N . \label{sheat0}
\end{eqnarray} 
Here, $u_0(x)$ is the initial distribution of the density $u$ and $\rho>1$. Indeed, Fujita, in his seminal paper \cite{Fujita}, proved the following
If $u_0\in C_0(\R^N)$ be nonnegative and nonzero, then
\begin{enumerate}
	\item If $1<\rho<1+\frac{2}{ N}$, there exists no positive global solution of \eqref{sheat}--\eqref{sheat0}.
	\item If $\rho> 1+\frac{2}{ N}$, there exists $u_0\in L^{\frac{ N}{2}(\rho-1)}(\R^N)$ such that there exists a global positive solution of \eqref{sheat}--\eqref{sheat0}.
\end{enumerate}
The critical case $\rho = 1+\frac{2}{N}$ was resolved by Weissler \cite{Weissler}. Since then, a huge amount of work dedicate to treat \eqref{sheat} and its generalizations. The study of partial differential equations in spaces with low regularity, such as Lebesgue, provides us weaker notions for solutions of the studied equations, allowing its solvability, and modeling natural situations where the initial data are not regular. Heat equation \eqref{heat} was already treated in Lebesgue spaces \cite{BrezisCaz,Weissler}. 

In this work, we want to give sufficient conditions to the local existence of $L^q$-mild solutions for the Cauchy problem
\begin{eqnarray}\label{sheatG}
	u_t = \Delta_\mathcal{G} u + |u|^{\rho-1}u,\  \mbox{in}\ (0,\infty)\times \R^{N+k} ,\\
	u(0)=u_0,\ \mbox{in}\ \R^{N+k} , \label{sheatG0}
\end{eqnarray} 
where $\Delta_\mathcal{G} = \dfrac{1}{2}\left(\Delta_{x}+|x|^2\Delta_{y}\right)$ and $\Delta_x, \Delta_y$ denote the classical Laplacian in the variables $x\in\R^N$ and $y\in\R^k$, respectively. More precisely, we give sufficient conditions to the existence of local and global mild solutions of \eqref{sheatG}--\eqref{sheatG0}, for initial conditions in $L^p(\R^{N+k})$. We prove that the local solutions can be extended to a maximal interval $(0,T_{max})$ for which either $T_{max}<\infty$ or the norm of the solutions blows-up in finite time.

A problem similar to \eqref{sheatG}--\eqref{sheatG0}, for $N=k=1$, was studied in \cite{Lv-Wei-19}, where the authors use the kernel
\begin{equation}\label{Lv}
	K(x,y;t)=\dfrac{\sqrt{2}}{2\pi t^{3/2}}e^{-\frac{x^2}{2t}-\frac{y^2}{t^2}} ,
\end{equation}
which is the heat kernel for for the operator $\frac{1}{2}(\partial_{x}^2u+t\partial_{y}^2u)$. They provided sufficient conditions for the global existence and for blow-up of solutions to an integral equation involving the heat kernel \eqref{Lv}.

A key ingredient in the analysis of the Cauchy problem \eqref{sheat}--\eqref{sheat0} is the estimates for the solution of the linear part of \eqref{sheatG}. Consequently, it is fundamental to prove $L^p$ estimates for the heat kernel associated with the equation. In this work, we give a proof of such estimates by analyzing the representation of the heat kernel. The fact that this kernel is given in terms of the partial Fourier transform, and not explicitly, brings substantial difficulties in the analysis. By following the Geometric Method in \cite{Calin-11}, we find that 
\begin{equation}\label{kernelpq}
	L(x, x_0,t)=\left(
	\dfrac{a}{2\pi\sinh(at)}\right)^{N/2}e^{-\frac{a}{2\sinh(at)} \left(  (|x|^2 + |x_0|^2) \cosh(at)  -2x\cdot x_0 \right)},\ t>0,
\end{equation}
is the heat kernel of the heat equation with quadratic potential 
\begin{equation}\label{heatpq}
	u_t =	\frac{1}{2} \Delta - \frac{1}{2}a^2|x|^2 ,
\end{equation}
with the endpoint of the path for the classical action being  $x= x_0$.

Then, if we apply the partial Fourier transform on the variable $y$ in	$u_t = \Delta_\mathcal{G} u$, we get \eqref{heatpq} with $a=|\xi|$, where $\xi$ is the Fourier variable. Then, the inverse partial Fourier transform of \eqref{kernelpq} gives
\begin{equation}\label{HK}
	K(x, x_0,y;t) = \frac{1}{(2\pi)^{\frac{N+2k}{2}}} \int_{\R^k} \left(\frac{|\xi|}{\sinh(|\xi| t)}\right)^{\frac{N}{2}} e^{i\xi  \cdot y- \frac{|\xi|}{2}\left(  (|x|^2 + |x_0|^2) \coth(|\xi| t)  -2x\cdot x_0 \csch(|\xi|t) \right)} \di\xi, 
\end{equation}
for $(x, x_0,y) \in \R^{2N+k},\ t>0.$ This expression also appears in Theorem 3.4 of the paper by Garofalo and Tralli \cite{Garofalo-Trallli-22}, with slight differences due to constant choices in the definition of the Grushin operator. They proved that \eqref{HK} is a solutions of 
\begin{eqnarray*}\label{heatG}
	u_t = \Delta_\mathcal{G} u , \  \mbox{in}\ (0,\infty)\times \R^{N+k} ,\\
	u(0)=\delta(x)\otimes\delta(y),\ \mbox{in}\ \R^{N+k} .
\end{eqnarray*} 

We also refer to the recent Stempak's paper \cite{Stempak-25} for a closed formula of the heat kernel obtained by means of a scalar transform which is a mixture of the partial Fourier transform and a transform based on the scaled Hermite functions.

\section{The heat kernel and the heat semigroup in $L^p$}

In the introduction, we saw that the heat kernel of the equation $u_t = \Delta_\mathcal{G} u$ is given by \eqref{HK}. One may perform straightforward although long computations to prove that \eqref{HK} is $C^\infty$ and is actually the kernel of the equation, that is, 
\begin{enumerate}
	\item $K_t = \Delta_\mathcal{G} K$;
	\item $\int_{\R^{N+k}} K(x,0,y;t) d(x,y) =1$ ;
	\item $\lim_{t\rightarrow0^+}\int_{\R^{N+k}} K(x,w,y-z;t) \varphi(w,z) d(w,z) = \varphi(x,y)$ .
\end{enumerate}

For completeness, we include the following well-known result.

\begin{proposition}\normalfont\label{Teoremanucleo}
	The heat kernel of $\Delta_{\mathcal{G}}=\frac{1}{2}(\Delta_{x}+|x|^2\Delta_{y})$ is given by \eqref{HK}.
\end{proposition}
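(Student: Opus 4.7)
The plan is to derive \eqref{HK} by reducing the Grushin heat equation to a family of harmonic-oscillator heat equations via the partial Fourier transform in the $y$ variable, and then to verify the three defining properties (1)--(3) stated immediately before the proposition.

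First I apply the partial Fourier transform $\hat u(x,\xi,t) = \int_{\R^k} u(x,y,t)\, e^{-i\xi\cdot y}\di y$ to $u_t = \Delta_{\mathcal{G}} u$. Since $\widehat{\Delta_y u} = -|\xi|^2 \hat u$ and $\Delta_x$ commutes with this transform, the equation becomes
\begin{equation*}
\hat u_t = \tfrac12\Delta_x\hat u - \tfrac12|\xi|^2|x|^2 \hat u,
\end{equation*}
which is exactly \eqref{heatpq} with the parameter $a$ replaced by $|\xi|$. The Mehler-type kernel \eqref{kernelpq}, obtained by the geometric method of \cite{Calin-11}, thus provides the solution of the transformed equation with initial data $\delta(x-x_0)$ as $L(x,x_0,t)$ evaluated at $a=|\xi|$. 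Using translation-invariance of the Grushin equation in $y$ to place the $y$-Dirac mass at the origin and inverting the Fourier transform then gives
\begin{equation*}
K(x,x_0,y;t) = \frac{1}{(2\pi)^k}\int_{\R^k} L(x,x_0,t)\big|_{a=|\xi|}\, e^{i\xi\cdot y}\di\xi,
\end{equation*}
and after collecting constants this is precisely \eqref{HK}.

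To verify property (1) I differentiate under the integral sign and use that for each fixed $\xi$ the integrand $L(x,x_0,t)|_{a=|\xi|}\,e^{i\xi\cdot y}$ is annihilated by $\partial_t - \tfrac12\Delta_x - \tfrac12|x|^2\Delta_y$ by construction, so the same holds after integrating in $\xi$. For property (2), integrating $K(x,0,y;t)$ in $y$ first produces $(2\pi)^k\delta(\xi)$, and evaluating the remaining integrand at $\xi=0$ via $\sinh(|\xi|t)/|\xi|\to t$ and $\cosh(|\xi|t)\to 1$ yields the classical Gaussian $(2\pi t)^{-N/2}e^{-|x|^2/(2t)}$, whose $x$-integral equals $1$. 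Property (3) then follows from the $L^1$-normalization of $K$ together with its concentration at the origin as $t\to 0^+$, by the standard approximation-of-the-identity argument applied to the continuous test function $\varphi$.

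The main obstacle is a rigorous control of the integral in \eqref{HK}: for each fixed $t>0$, the factor $(|\xi|/\sinh(|\xi|t))^{N/2}$ has a removable singularity at $\xi=0$, tending to $t^{-N/2}$, while for $|\xi|\to\infty$ the asymptotics $\sinh(|\xi|t)\sim \tfrac12 e^{|\xi|t}$ together with the positivity of the real part of the quadratic form $|\xi|\coth(|\xi|t)(|x|^2+|x_0|^2)/2 - |\xi|\,\csch(|\xi|t)\,x\cdot x_0$ in the exponent produce exponential decay of the integrand in $|\xi|$. With these uniform bounds on compact subsets of $\{t>0\}\times\R^{2N+k}$ in hand, Fubini's theorem, differentiation under the integral sign, and dominated convergence justify (1)--(3) and complete the proof.
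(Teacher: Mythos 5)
Your proposal is correct and follows essentially the same route as the paper: \eqref{HK} is obtained by the partial Fourier transform reduction to the harmonic-oscillator kernel \eqref{kernelpq}, the equation $\partial_t K=\Delta_{\mathcal{G}}K$ is checked by differentiating under the integral sign, and the unit mass is computed by performing the Gaussian $x$-integral and collapsing the remaining $(y,\xi)$-integral at $\xi=0$. The only real difference is one of rigor at a single step: where you invoke $\int_{\R^k}e^{i\xi\cdot y}\di y=(2\pi)^k\delta(\xi)$ formally, the paper makes the same collapse precise by inserting the regularizer $e^{-s|y|^2}$, rescaling $\xi=2\sqrt{s}\,\eta$, and letting $s\to0^+$ (using that $h_t(b)=b(\csch(bt)\mathrm{sech}(bt)-\coth(bt))$ vanishes at $b=0$).
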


\begin{proof}
	In fact, for any $x_0\in\R^N$, denote
	\begin{equation}\label{fkernel}
		f(x, x_0,\xi;t) = \left(\frac{|\xi|}{\sinh(|\xi| t)}\right)^{\frac{N}{2}} e^{-\frac{|\xi|}{2}\left(  (|x|^2 + |x_0|^2) \coth(|\xi| t)  -2x\cdot x_0 \csch(|\xi|t) \right)}.
	\end{equation}
	Then, straightforward computations lead to
	\begin{align*}
		\partial_{x_i}f(x, x_0,\xi;t) & 
		= - f(x, x_0,\xi;t)  |\xi| (x_i\coth(|\xi| t)  -x_{0_i} \csch(|\xi|t) ); \\[.1cm]
		\partial_{x_i}^2f(x, x_0,\xi;t) 
		&=f(x, x_0,\xi;t)\left(  (x_i|\xi|\coth(|\xi| t) - |\xi|x_{0_i} \csch(|\xi|t) )^2-|\xi|\coth(|\xi| t)\right); \\[.1cm]
		\partial_{t}f(x, x_0,\xi;t)
		&=\dfrac{|\xi|^2\csch^2(|\xi| t) }{2}f(x, x_0,\xi;t) ((|x|^2 +  |x_0|^2|)\\
		& \ \   - 2x\cdot x_0 \coth(|\xi| t)\csch(|\xi| t)  -N \coth(|\xi| t)).
	\end{align*}
	Then, we can derivative under the sign of the integral
	%\begin{align*}
	%	\partial_tK&=\dfrac{1}{2\pi}\int_{-\infty}^{+\infty}g_t(x,\xi;t)e^{iy|\xi|}d|\xi|,\label{derivet} \\[.1cm]
	%	\partial_{x_i}^2K&=\dfrac{1}{2\pi}\int_{-\infty}^{+\infty}\partial_{x_i}^2f(x,\xi;t)e^{iy|\xi|}d|\xi|, \\[.1cm]
	%	\partial_y^2K&=-\dfrac{1}{2\pi}\int_{-\infty}^{+\infty}f(x,\xi;t)e^{iy|\xi|}|\xi|^2d|\xi|.
	%\end{align*}
	to get $\partial_tK-\Delta_{\mathcal{G}}K=0$. 
	
	Now, from Fubini's theorem, we rewrite
	\begin{align*}
		&\int_{\R^{N+k}}K(x, x_0,y;t) \di(x,y) \\
		= & \int_{\R^k}\int_{\R^k}\left(\dfrac{1}{(2\pi)^{\frac{N+2k}{2}}} \int_{\R^N} \left(\frac{|\xi|}{\sinh(|\xi| t)}\right)^{\frac{N}{2}} e^{i \xi \cdot y -\frac{|\xi|}{2}\left(  (|x|^2 + |x_0|^2) \coth(|\xi| t)  -2x\cdot x_0 \csch(|\xi|t) \right)} \di x \di y\right) \di\xi .
	\end{align*}
	The identity
	\begin{equation*}
		\int_{\R^N} e^{-a |x|^2+ b\cdot x} \di x=\left(\frac{\pi}{a}\right)^{\frac{N}{2}} e^{\frac{|b|^2}{4 a}}, \ \forall \ a >0,  \ b \in \mathbb{R^N},
	\end{equation*}
	gives then
	\begin{align*}
		&\int_{\R^{N+k}}K(x, x_0,y;t) \di(x,y) \\
		= & \dfrac{1}{(2\pi)^{k}} \int_{\R^k} \int_{\R^k} \left(\frac{|\xi|}{\sinh(|\xi| t)}\right)^{\frac{N}{2}}  e^{i\xi \cdot y} \left(\frac{1}{|\xi|\coth(|\xi| t)}\right)^{\frac{N}{2}} e^{ \frac{ |x_0|^2}{2}|\xi|\left( \csch(|\xi| t)\mathrm{sech}(|\xi| t) -  \coth(|\xi| t) \right)} \di y  \di\xi \\[.1cm]
		= & \dfrac{1}{(2\pi)^k} \int_{\R^k}\int_{\R^k} \left(\frac{1}{\cosh(|\xi| t)}\right)^{\frac{N}{2}} e^{ \frac{ |x_0|^2}{2}|\xi|\left( \csch(|\xi| t)\mathrm{sech}(|\xi| t) -  \coth(|\xi| t) \right)} e^{i \xi \cdot y} \di y \di\xi.
	\end{align*}
	Define
	\begin{equation*}
		I(s):=\dfrac{1}{(2\pi)^k} \int_{\R^k}\int_{\R^k} \left(\frac{1}{\cosh(|\xi| t)}\right)^{\frac{N}{2}} e^{\frac{ |x_0|^2}{2} |\xi|\left(\csch(|\xi| t)\mathrm{sech}(|\xi| t) -  \coth(|\xi| t) \right)} e^{i \xi \cdot y} e^{-s|y|^2} dy \di \xi .
	\end{equation*}
	Hence, using $\xi = 2\sqrt{s}\eta$, we have
	\begin{align*}
		I(s)
		&= \frac{1}{(4\pi s)^{\frac{k}{2}}} \int_{\R^k} \left(\frac{1}{\cosh(|\xi| t)}\right)^{\frac{N}{2}} e^{ \frac{ |x_0|^2}{2} |\xi|\left( \csch(|\xi| t)\mathrm{sech}(|\xi| t) -  \coth(|\xi| t) \right)} e^{-\frac{|\xi|^2}{4s}} \di \xi  \\
		&= \frac{1}{\pi^{\frac{k}{2}}} \int_{\R^k} \left(\frac{1}{\cosh(|\eta| 2\sqrt{s} t)}\right)^{\frac{N}{2}} e^{ \frac{ |x_0|^2}{2} |2\sqrt{s}\eta|\left( \csch(|2\sqrt{s}\eta| t)\mathrm{sech}(|2\sqrt{s}\eta| t) -  \coth(|2\sqrt{s}\eta| t) \right)} e^{-|\eta|^2}d\eta.
	\end{align*}
	
	Taking the limit as $s\rightarrow 0^+$, we get 
	\begin{equation}\label{mass}
		\int_{\R^{N+k}}K(x, x_0,y;t)d(x,y) =  1 ,
	\end{equation}	
	for all $t>0$ and $x_0\in\R^N$. We used that the function $h:\R_+ \to\R$ given by
	$$h_t(b) =  b\left( \csch(b t)\mathrm{sech}(b t) -  \coth(b t) \right)$$ 
	satisfies $h_t(0) = 0$, which can be proved by direct calculations.
\end{proof}

\begin{remark}\label{remarkHK}
	Let us underline some facts on the heat kernel \eqref{HK}.
	\begin{enumerate}
		\item The idea in the beginning of latter proof can be used to prove that $K\in C^\infty$. 
		\item The kernel is symmetric in the following sense: $K(x, x_0,y;t) = K( x_0,x,y;t)$ and $K(x, x_0,-y;t) = K(x, x_0,y;t)$. Furthermore, the change of variables $w=\xi t$ in \eqref{HK} leads to
		\begin{equation}\label{symmetry}
			K(x, x_0,y;t)= t^{-\frac{N+2k}{2}} K(t^{-\frac{1}{2}}x, t^{-\frac{1}{2}}x_0, t^{-1}y;1).
		\end{equation} 
		\item As a consequence of \eqref{symmetry}, $K$ can be regarded as an approximation of identity. See Lemma \ref{approxid} below.
		\item The positivity of $K$ is a delicate subject. Fortunately, it was proved in \cite{Chang-Gaveau-09,Chang-Greiner-13} that \eqref{HK} is positive, for $N=k=1$. For higher dimensions, see Theorem 2.4 of Biagi and Bramanti's paper \cite{Bi-Br-23}. 
		\item The positivity of the kernel and with Proposition \ref{Teoremanucleo} allows to conclude that, for any $x_0\in\R^N$ and $t>0$,
		\begin{equation}\label{K1}
			\|K(\cdot,x_0,\cdot;t)\|_{L^1(\R^{N+k})} =1.
		\end{equation}
	\end{enumerate}	
\end{remark}

The next result provides the estimates of the heat kernel in $L^p$, which are key estimates to treat \eqref{sheatG}--\eqref{sheatG0} in Lebesgue spaces.

\begin{theorem}\label{kernelpropertys}
	Let $K$ be the heat kernel \eqref{HK}. If $1\leq q \leq \infty$, then $$\|K(\cdot,x_0,\cdot;t)\|_{L^q(\R^{N+k})}\leq Ct^{\frac{N+2k}{2}\left(1-\frac{1}{q}\right)}  ,$$
	for any $x_0\in\R^N$ and $t>0$.
\end{theorem}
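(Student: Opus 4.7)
My plan is to invoke the self-similarity relation \eqref{symmetry} to extract the $t$-dependence, reducing the problem to a uniform $L^q$-bound at $t=1$. Raising \eqref{symmetry} to the $q$-th power and changing variables $(x,y)\mapsto(t^{1/2}u, t v)$, the Jacobian $t^{(N+2k)/2}$ combines with the $q$-th power of the prefactor $t^{-q(N+2k)/2}$ to yield
\[
\|K(\cdot,x_0,\cdot;t)\|_{L^q}^q \;=\; t^{\frac{N+2k}{2}(1-q)}\,\|K(\cdot,t^{-1/2}x_0,\cdot;1)\|_{L^q}^q,
\]
from which the stated power of $t$ is read off after taking the $q$-th root, provided one can establish $\sup_{x_0\in\R^N}\|K(\cdot,x_0,\cdot;1)\|_{L^q(\R^{N+k})}<\infty$.

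To prove this uniform bound at $t=1$, I would first handle the two endpoints. For $q=1$ it is exactly the normalization \eqref{K1}, which gives the value $1$ for every $x_0$. For $q=\infty$, the key step is an algebraic completion of the square in $x$ inside the exponent of \eqref{HK} (at $t=1$): using the identity $\coth^2 u-\csch^2 u\equiv 1$, one rewrites
\[
(|x|^2+|x_0|^2)\coth|\xi|-2\,x\cdot x_0\,\csch|\xi|
\;=\;\coth|\xi|\,\bigl|x-\mathrm{sech}(|\xi|)\,x_0\bigr|^{2}+|x_0|^{2}\tanh|\xi|,
\]
which is manifestly nonnegative. Discarding this nonnegative term and the modulus-one factor $e^{i\xi\cdot y}$ yields the $(x,x_0,y)$-independent majorant
\[
|K(x,x_0,y;1)|\;\le\;(2\pi)^{-(N+2k)/2}\int_{\R^k}\left(\frac{|\xi|}{\sinh|\xi|}\right)^{\!N/2}\di\xi,
\]
and the integral on the right is finite because $|\xi|/\sinh|\xi|$ stays near $1$ for small $|\xi|$ and decays like $|\xi|\,e^{-|\xi|}$ at infinity.

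For the intermediate range $1<q<\infty$, I would invoke the log-convexity bound $\|K\|_{L^q}\le\|K\|_{L^1}^{1/q}\|K\|_{L^\infty}^{1-1/q}$, which inherits the uniformity in $x_0$ from the two endpoints. The main obstacle is the $L^\infty$ step: one must carry out the completion of the square cleanly enough to leave the \emph{smaller} hyperbolic function $\tanh|\xi|$ (rather than $\coth|\xi|$) as the coefficient of $|x_0|^{2}$, because this is exactly what forces the $x_0$-dependent Gaussian factor to be bounded above by $1$ and hence the pointwise majorant to be genuinely $x_0$-independent; without this algebraic cancellation one would lose uniformity in the base point.
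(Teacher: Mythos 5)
Your proof is correct, but it takes a genuinely different route from the paper's. The paper works at fixed $t$ and splits into three cases: for $2\leq q<\infty$ it applies the Hausdorff--Young inequality in the $y$-variable (viewing $K$ as the partial Fourier transform of the Mehler-type factor $f$ in \eqref{fkernel}), then Minkowski's inequality for integrals and an explicit Gaussian integral in $x$, reducing everything to an integrable hyperbolic expression in $\xi$; for $1\leq q<2$ it interpolates between \eqref{K1} and the already-proved $L^2$ bound; and for $q=\infty$ it substitutes $\tau=\xi t$. You instead strip out all the $t$-dependence at once through the self-similarity \eqref{symmetry} and prove a single bound at $t=1$, uniform in the base point, from the two endpoints: $q=1$ via \eqref{K1} again, and $q=\infty$ via completion of the square using $\coth^2u-\csch^2u=1$, which correctly leaves the coefficient $\tanh|\xi|\geq 0$ on $|x_0|^2$ and hence an $x_0$-free majorant; log-convexity of $q\mapsto\|\cdot\|_{L^q}$ then covers all intermediate exponents. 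Your algebra checks out, the finiteness of $\int_{\R^k}\bigl(|\xi|/\sinh|\xi|\bigr)^{N/2}\di\xi$ is justified exactly as you say, and both proofs share the same reliance on the positivity of $K$ (Remark \ref{remarkHK}) for the $L^1$ endpoint. Your route is more unified and arguably more elementary, avoiding Hausdorff--Young and Minkowski entirely and treating $1\leq q\leq\infty$ in one stroke; the paper's case (i) argument, on the other hand, produces explicit constants $C(p,q,N,k)$ without needing the scaling identity. One point you should make explicit rather than elide: your scaling computation yields the exponent $t^{\frac{N+2k}{2}\left(\frac{1}{q}-1\right)}=t^{-\frac{N+2k}{2}\left(1-\frac{1}{q}\right)}$, i.e.\ a \emph{negative} power of $t$. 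This is also what the paper's own proof obtains (e.g.\ $t^{-\frac{N+2k}{4}}$ at $q=2$) and what is used later in \eqref{semigroup}, so the positive exponent displayed in the theorem statement is a sign typo; your write-up should state the corrected exponent instead of claiming to read off ``the stated power'' verbatim.
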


\begin{proof}
	Notice that $K(\cdot,x_0,y;\cdot)=\mathcal{F}^{-1}[f](\cdot,x_0,\xi;\cdot)=\mathcal{F}[f](\cdot,x_0,-\xi;\cdot)=\mathcal{F}[f](\cdot,x_0,\xi;\cdot)$, where $f$ is defined in \eqref{fkernel}.
	%	\begin{equation*}
		%		f(x,\xi;t)=\left(\frac{|\xi|}{\sinh(|\xi| t)}\right)^{\frac{N}{2}} e^{-\frac{|\xi|}{2}|x|^2\coth(|\xi| t)}.
		%	\end{equation*}
	We split the proof into three cases: i) $2\leq q<\infty$, ii) $1\leq q<2$, iii) $q=\infty$.

	%	We can use
	%	\begin{equation}
		%		\|K(\cdot,x_0,\cdot;t)\|_{L^q(\R^{N+k})}=
		%		%\left(\int_{\R^N}\int_{\R^k}|K(x,x_0,y;t)|^qdy\di x \right)^{1/q}=
		%		\left(\int_{\R^N}\|K(x,x_0,\cdot;t)\|^q_{L^q(\R)}\di x \right)^{1/q} 
		%	\end{equation}
	We can apply Hausdorff-Young inequality, with $1< p\leq 2$ and $p^{-1}+q^{-1}=1$, to estimate
	\begin{equation*}
		\|K(x,x_0,\cdot;t)\|^q_{L^q(\R^k)}\leq \|f(x,x_0,\cdot,t)\|^q_{L^p(\R^k)}.
	\end{equation*}
	Accordingly,
	%	\begin{equation}
		%		\|K(\cdot,\cdot;t)\|_{L^q(\R^{N+k})}\leq \left(\int_{\R^k}\|f(x,\cdot;t)\|^q_{L^p(\R^k)}\di x \right)^{1/q} ,
		%	\end{equation}
	%	that is,
	\begin{align}
		\|K(\cdot,x_0,\cdot;t)\|_{L^q(\R^{N+k})}&\leq %\left(\int_{\R^k}\left(\int_{\R^k}|f(x,\xi;t)|^p\di \xi \right)^{q/p}\di x \right)^{1/q}\label{i1} \\[.1cm]
		%&=
		\left(\left(\int_{\R^N}\left(\int_{\R^k}|f(x,x_0,\xi;t)|^p\di \xi \right)^{q/p}\di x \right)^{p/q}\right)^{1/p}.\label{i2}
	\end{align}
	Since $\frac{q}{p}=\frac{1}{p-1}\geq 1$ and $|f|\geq 0$, Minkowski inequality for integrals yields
	\begin{equation*}
		\left(\int_{\R^N}\left(\int_{\R^k}|f(x,x_0,\xi;t)|^p\di \xi \right)^{q/p}\di x \right)^{p/q}\leq \int_{\R^k}\left(\int_{\R^N}|f(x,x_0,\xi;t)|^{p\cdot\frac{q}{p}}\di x \right)^{p/q}\di \xi .
	\end{equation*}
	Recalling the expression for the function $f$, we write
	\begin{align*}
		&\int_{\R^N}\left(\int_{\R^k}|f(x,x_0,\xi;t)|^q\di x \right)^{p/q}\di \xi  \\
		= & \int_{\R^k}\left(\int_{\R^N} \left(\frac{|\xi|}{\sinh(|\xi| t)}\right)^{\frac{Nq}{2}} e^{-\frac{q|\xi|}{2}\left(  (|x|^2 + |x_0|^2) \coth(|\xi| t)  -2x\cdot x_0 \csch(|\xi|t) \right)} \di x \right)^{p/q} \di\xi \\[.1cm]
		= & \int_{\R^k} \left(\frac{|\xi|}{\sinh(|\xi| t)}\right)^{\frac{Np}{2}} \left(
		\dfrac{2\pi\tanh(|\xi| t)}{|\xi| q}\right)^{\frac{Np}{2q}} e^{ \frac{ p|x_0|^2}{2}\xi|\left( \csch(|\xi| t)\mathrm{sech}(|\xi| t) -  \coth(|\xi| t) \right)} \di \xi  \\[.1cm]
		\leq & \left(\frac{2\pi}{q}\right)^\frac{Np}{2q} t^{-\frac{N}{2}} \int_{\R^k}\cosh^{-\frac{Np}{2q}}(|\xi| t) e^{ \frac{ p|x_0|^2}{2}|\xi|\left( \csch(|\xi| t)\mathrm{sech}(|\xi| t) -  \coth(|\xi| t) \right)} \di \xi .
	\end{align*}
	Recall that 
	$$|\xi|\left(\csch(|\xi| t)\mathrm{sech}(|\xi| t) -  \coth(|\xi| t) \right) \leq 0 .$$ Then, it is sufficient estimating the $\cosh$ and using polar coordinates, as follows
	\begin{equation*}
		\int_{\R^k}\cosh^{-\frac{Np}{2q}}(|\xi| t)\di \xi  %& = %2\int_{0}^{\infty}\cosh^{-\frac{p}{2q}}(\xi t)\di \xi 
		%2^{1+Np/2q}\int_{0}^{\infty}\left(
		%\dfrac{1}{e^{\xi t}+e^{-\xi t}}\right)^{Np/2q}\di \xi  \\[.1cm]
		\leq 
		2^{1+Np/2q} \int_{0}^{\infty}
		e^{-\frac{r t Np}{2q}} r^{k-1} \di \xi  = (k-1)! \left(\frac{ t Np}{2q}\right)^k  = c_{p,q,N,k} t^{-k} .
	\end{equation*}
	So,
	\begin{align}\label{i3}
		\int_{\R^k}\left(\int_{\R^N}|f(x,x_0,\xi;t)|^q\di x \right)^{p/q}\di \xi \leq  C'(p,q,N,k)t^{-\frac{N+2k}{2}}  .
	\end{align}

	From \eqref{i2} and \eqref{i3}, we conclude that
	\begin{align*}
		\|K(\cdot,x_0,\cdot;t)\|_{L^q(\R^{N+k})}\leq  C(p,q,N,k)t^{-\frac{N+2k}{2}\left(1-\frac{1}{q}\right)} ,
	\end{align*}
	with $ 2 \leq q < \infty.$ This concludes i).

	Notice that the estimate is already proved in i), for $q=2$. From Remark \ref{remarkHK} (6), we get the case $q=1$. Indeed, we have $	\|K(\cdot,x_0,\cdot;t)\|_{L^1(\R^{N+k})}=1$ and $	\|K(\cdot,x_0,\cdot;t)\|_{L^2(\R^{N+k})}\leq C t^{-\frac{N+2k}{4}} $. Then, for $1< q<2$, we can interpolate Lebesgue spaces $L^1$ and $L^2$, with $\lambda = \frac{2}{q}-1 \in (0,1)$:
	\begin{align*}
		\|K(\cdot,x_0,\cdot;t)\|_{L^q(\R^{N+k})}&\leq \|K(\cdot,x_0,\cdot;t)\|_{L^1(\R^{N+k})}^{\frac{2}{q}-1}\|K(\cdot,x_0,\cdot;t)\|_{L^2(\R^{N+k})}^{2-\frac{2}{q}} \\ 
		&\leq Ct^{-\frac{N+2k}{2}\left(1-\frac{1}{q}\right)} .
	\end{align*}
	It proves ii).

	For $q=\infty$, case iii), it is sufficient to make the change of variable $\tau=\xi t$ in \eqref{HK} and use that the function $s\mapsto \frac{s}\sinh(s)$ is integrable.
\end{proof}

%\section{The Cauchy problem}

In the following lemma, we will denote $K_t(x,w,y) : = K(x,w,y,t)$. Then, we will prove that the kernel $K_t$ is an approximation of the identity in the following sense.
\begin{lemma}\label{approxid}
	\begin{enumerate}
		\item If $\varphi$ is bounded and uniformly continuous in $\R^{N+k}$, then $$\lim_{t\rightarrow0^+} \int_{\R^{N+k}} K_t(x,w,y-z) \varphi(w,z) \di(w,z) \rightarrow \varphi(x,y) ,$$
		uniformly.
		\item If $\varphi\in L^p(\R^{N+k})$, $1\leq p<\infty$, then 
		$$\left\|\lim_{t\rightarrow0^+} \int_{\R^{N+k}} K_t(\cdot,w,\cdot-z) \varphi(w,z) \di(w,z) - \varphi \right\|_{L^p} =0 .$$
		\item If $\varphi\in L^\infty(\R^{N+k})$, then 
		$$\lim_{t\rightarrow0^+} \int_{\R^{N+k}}  K_t(x,w,y-z) \varphi(w,z) \di(w,z) \rightarrow \varphi(x,y) ,$$
		uniformly in compact sets.
	\end{enumerate}
\end{lemma}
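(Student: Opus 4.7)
Set $T_t \varphi(x, y) := \int_{\R^{N+k}} K_t(x, w, y - z)\, \varphi(w, z) \di(w, z)$. The starting point is the identity
\[\int_{\R^{N+k}} K_t(x, w, y - z) \di(w, z) = 1 \qquad \text{for every } (x, y) \in \R^{N+k},\ t > 0,\]
which I obtain from Proposition \ref{Teoremanucleo}, the symmetry $K(x, x_0, y; t) = K(x_0, x, y; t)$ of Remark \ref{remarkHK}, and the substitution $\tilde z = y - z$. For \textbf{(1)}, I would write $T_t \varphi(x, y) - \varphi(x, y) = \int K_t\, [\varphi(w, z) - \varphi(x, y)] \di(w, z)$, fix $\epsilon > 0$, use uniform continuity to pick $\delta > 0$ with $|\varphi(w, z) - \varphi(x, y)| < \epsilon$ on $A_\delta(x, y) := \{|(w - x, z - y)| < \delta\}$, and split the integral. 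The near piece is controlled by $\epsilon$ via the mass-one identity; the far piece by $2\|\varphi\|_\infty J_t(x, y)$ with $J_t(x, y) := \int_{A_\delta^c} K_t \di(w, z)$. Applying the scaling \eqref{symmetry} with $\bar w = t^{-1/2} w$, $\bar z = t^{-1}(y - z)$ and $\tilde x = t^{-1/2} x$ recasts
\[J_t(x, y) = \int_{\{|\bar w - \tilde x|^2 + t |\bar z|^2 \geq \delta^2 / t\}} K_1(\tilde x, \bar w, \bar z) \di(\bar w, \bar z),\]
and I would conclude by means of a uniform-in-$\tilde x$ Gaussian-type upper bound on $K_1$---available in \cite{Bi-Br-23} or derivable from \eqref{HK} after completing the square in $\bar w$ and majorising the $\cosh$-factor---to obtain $J_t \to 0$ uniformly in $(x, y)$.

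For \textbf{(2)}, the operator $T_t$ is a contraction on $L^p$ for every $1 \leq p < \infty$. This follows from Jensen's inequality applied to the probability measure $K_t(x, \cdot, y - \cdot) \di(w, z)$ together with the dual mass-one identity $\int K_t(x, w, y - z) \di(x, y) = 1$ (again deduced from mass one after $\tilde y = y - z$), giving
\[\|T_t \varphi\|_{L^p}^p \leq \iint K_t(x, w, y - z)\, |\varphi(w, z)|^p \di(w, z) \di(x, y) = \|\varphi\|_{L^p}^p.\]
For $\psi \in C_c^\infty(\R^{N+k})$, part (1) yields $\|T_t \psi - \psi\|_\infty \to 0$, and the elementary interpolation $\|f\|_{L^p}^p \leq \|f\|_\infty^{p-1} \|f\|_{L^1}$ combined with the uniform bound $\|T_t \psi - \psi\|_{L^1} \leq 2 \|\psi\|_{L^1}$ (from the $p = 1$ case of the contraction) delivers $L^p$-convergence; the triangle inequality together with the density of $C_c^\infty$ in $L^p$ then closes the argument by the standard $3\epsilon$ trick.

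For \textbf{(3)}, given a compact $\mathcal K \subset \R^{N+k}$ and (a continuous representative of) $\varphi \in L^\infty$, fix $R > 0$ with $\mathcal K \subset B_R$ and a smooth cutoff $\eta$ equal to $1$ on $B_R$ and supported in $B_{2R}$. Split $\varphi = \eta \varphi + (1 - \eta) \varphi$: the first summand lies in $BUC$, so part (1) gives $T_t(\eta \varphi) \to \eta \varphi = \varphi$ uniformly on $\mathcal K$; the second is dominated on $\mathcal K$ by $\|\varphi\|_\infty \int_{B_R^c} K_t(x, w, y - z) \di(w, z)$, which tends to zero uniformly on $\mathcal K$ by the same tail estimate as in (1)---and here uniformity is required only for $(x, y)$ in the compact $\mathcal K$, so the Gaussian-bound step is essentially trivial. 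The main obstacle across all three parts is therefore the uniform-in-$\tilde x$ tail estimate underlying (1); (2) and (3) reduce to (1) in a standard fashion.
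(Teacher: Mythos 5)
Your proposal is correct in outline but follows a genuinely different route from the paper. The paper does not split into near and far regions at all: it uses the scaling \eqref{symmetry} to rewrite $T_t\varphi(x,y)-\varphi(x,y)$ as $\int_{\R^{N+k}} K_1(t^{-\frac12}x,\,t^{-\frac12}x-w,\,z)\,[\varphi(x-t^{\frac12}w,\,y-tz)-\varphi(x,y)]\,\di(w,z)$, invokes continuity of translations in $L^p$, and then reduces all three items to the convolution argument of \cite[Th.~8.14]{Folland-book}; the only kernel input it needs is a $t$-uniform integrable dominant for $(w,z)\mapsto K_1(t^{-\frac12}x,t^{-\frac12}x-w,z)$, which it manufactures by minimising the quadratic exponent in \eqref{fkernel} over $w$. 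Your scheme --- mass one plus a tail estimate for (1), the Schur/Jensen contraction from the two marginal identities together with density of $C_c^\infty$ for (2), a cutoff for (3) --- is the classical approximate-identity package, and its kernel input, a Gaussian-type upper bound for $K_1$ uniform in the first argument, is a citable result of \cite{Bi-Br-23}. That is arguably a more robust foundation than the paper's ad hoc domination; both proofs concentrate the real difficulty in the same place, namely controlling the kernel away from the diagonal uniformly in the rescaled (hence unbounded) variable $t^{-\frac12}x$.

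Two genuine gaps remain. First, part (2) fails at $p=1$: the inequality $\|f\|_{L^p}^p\le\|f\|_\infty^{p-1}\|f\|_{L^1}$ is vacuous when $p=1$, and the contraction only gives $\|T_t\psi-\psi\|_{L^1}\le 2\|\psi\|_{L^1}$, which does not tend to zero. You must prove $\|T_t\psi-\psi\|_{L^1}\to0$ for $\psi\in C_c^\infty$ directly, e.g.\ by splitting $\R^{N+k}$ into $B_{2R}$ and its complement (with $\mathrm{supp}\,\psi\subset B_R$), using the uniform convergence of (1) on the bounded piece and your tail estimate, integrated now over $(x,y)\in B_{2R}^c$ and $(w,z)\in B_R$, on the unbounded piece. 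Second, the claim that the Gaussian bound is ``derivable from \eqref{HK} after completing the square in $\bar w$ and majorising the $\cosh$-factor'' is not credible as stated: \eqref{HK} is an oscillatory integral in $\xi$, and taking absolute values under the integral destroys all decay in the $y$-variable, so no tail bound in $\bar z$ can come out of that manoeuvre; for the $y$-decay you genuinely need \cite{Bi-Br-23} or a contour-shift argument. Two minor points: in (3) choose the cutoff so that $\mathrm{supp}(1-\eta)$ stays at positive distance from $\mathcal K$ (say $\eta\equiv1$ on $B_{2R}$), otherwise the tail estimate degenerates near $\partial B_R$; and (3) can only hold at points where the $L^\infty$ representative is continuous, a caveat that is equally implicit in the paper.
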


\begin{proof}
	From \eqref{symmetry}, we have $K_t(x,x-w,z) = K_1(t^{-\frac{1}{2}}x, t^{-\frac{1}{2}}(x-w), t^{-1}z)$. Also, from the symmetry of $K$ and \eqref{K1}, we have 
	$$\|K_t(x,x-\cdot,\cdot)\|_{L^1(\R^{N+k})}  = 1 \ \mbox{and} \  \|K_1(t^{-\frac{1}{2}}x,\cdot,\cdot)\|_{L^1(\R^{N+k})} =1.$$
	Therefore, we can write
	\begin{align*}
		&\int_{\R^{N+k}} K_t(x,w,y-z) \varphi(w,z) \di(w,z) - \varphi(x,y) \\
		= & \int_{\R^{N+k}} K_t(x,x-w,y-z) \varphi(w,z) \di(w,z) - \int_{\R^{N+k}} K_t(x,x-w,y-z)\varphi(x,y) \di(w,z)\\
		= & \int_{\R^{N+k}} K_t(x,x-w,y-z) \left[ \varphi(w,z) - \varphi(x,y) \right]  \di(w,z)  \\
		= & \int_{\R^{N+k}} K_1(t^{-\frac{1}{2}}x,t^{-\frac{1}{2}}x-w,z) \left[ \varphi(x-t^{\frac{1}{2}}w,y - tz) - \varphi(x,y) \right]  \di(w,z)  \\
	\end{align*}
	We notice that small modifications in the proof of Th. 8.5 in \cite{Folland-book} shows that 
	$$\| \varphi(\cdot-t^{\frac{1}{2}}w,\cdot - tz) - \varphi \|_{L^p(\R^{N+k},\di(x,y))} \rightarrow 0,\ \ \mbox{as}\ \ t\rightarrow0.$$
	From now on, thanks to Theorem \ref{kernelpropertys}, the same proof as in the convolution case (see e.g. \cite[Th. 8.14]{Folland-book}) will work well as soon as $K_1(t^{-\frac{1}{2}}x,t^{-\frac{1}{2}}x-w,z)$ is dominated by some integrable function on $(w,z)\in \R^{N+k}$, for all $t>0$. Therefore, the rest of this proof will be dedicated to achieve this domination. We recall that $K_1$ is positive in $\R^{2N+k}$. Moreover, notice that 
	$$|t^{-\frac{1}{2}}x|^2 + |t^{-\frac{1}{2}}x -x|^2   =  2t^{-1}|x|^2 + |w|^2 - 2t^{-\frac{1}{2}} \ x\cdot w .$$
	Then
	\begin{align}
		&\left(|t^{-\frac{1}{2}}x|^2 + |t^{-\frac{1}{2}}x -x|^2\right)\coth(|\xi|) - 2(t^{-\frac{1}{2}}  x) \cdot (t^{-\frac{1}{2}}w) \csch(|\xi|) \nonumber\\
		= & \left( 2t^{-1}|x|^2  - 2t^{-\frac{1}{2}} \ x\cdot w\right)\left(\coth(|\xi|) - \csch(|\xi|)\right) + |w|^2 \coth(|\xi|) . \label{bounde}
	\end{align}
	Since
	$$\lim_{t\rightarrow0^+} \left( 2t^{-1}|x|^2  - 2t^{-\frac{1}{2}} \ x\cdot w\right) = \infty,$$
	we already can see that 
	\begin{eqnarray*}
		\lim_{t\rightarrow0^+}\left(\frac{|\xi|}{\sinh(|\xi|)}\right)^{\frac{N}{2}}  e^{i\xi \cdot y}e^{-\left(|t^{-\frac{1}{2}}x|^2 + |t^{-\frac{1}{2}}x -x|^2\right)\coth(|\xi|) - 2(t^{-\frac{1}{2}}  x) \cdot (t^{-\frac{1}{2}}w) \csch(|\xi|)}  =0,
	\end{eqnarray*}
	whence
	\begin{equation}\label{K1-0}
		\lim_{t\rightarrow0^+} K_1(t^{-\frac{1}{2}}x,t^{-\frac{1}{2}}x-w,z) =0.
	\end{equation}
	Nevertheless, we would like this convergence to be uniform also on $w$, since it is already uniform on $z$. For this, notice that the minimum value for the function $w \mapsto - 2t^{-\frac{1}{2}} |x||w|\left(\coth(|\xi|) - \csch(|\xi|)\right) + |w|^2 \coth(|\xi|) $ is 
	$$-\frac{4t^{-1}\left(\coth(|\xi|) - \csch(|\xi|)\right)^2 |x|^2}{4\coth(\xi)} . $$
	From this and \eqref{bounde},
	\begin{align}
		& \left(|t^{-\frac{1}{2}}x|^2 + |t^{-\frac{1}{2}}x -x|^2\right)\coth(|\xi|) - 2(t^{-\frac{1}{2}}  x) \cdot (t^{-\frac{1}{2}}w) \csch(|\xi|) \nonumber \\
		& \geq 2t^{-1}|x|^2\left(\coth(|\xi|) - \csch(|\xi|)\right) -\frac{t^{-1}\left(\coth(|\xi|) - \csch(|\xi|)\right)^2 |x|^2}{\coth(\xi)} \nonumber \\
		& = t^{-1}|x|^2 \left(\coth(|\xi|) - \csch(|\xi|)\right) \left[ 2 - \frac{\left(\coth(|\xi|) - \csch(|\xi|)\right)}{\coth(|\xi|)}\right] \nonumber \\
		& > t^{-1}|x|^2 \left(\coth(|\xi|) - \csch(|\xi|)\right) , \nonumber
	\end{align}
	since 
	$$0 \leq \frac{\left(\coth(|\xi|) - \csch(|\xi|)\right)}{\coth(|\xi|)} < 1 . $$
	Therefore,
	\begin{align*}
		& e^{- \frac{|\xi|}{2}\left(  (|t^{-\frac{1}{2}}x|^2 + |t^{-\frac{1}{2}}x-w|^2) \coth(|\xi| t)  -2(t^{-\frac{1}{2}}x)\cdot (t^{-\frac{1}{2}}x-w) \csch(|\xi|t) \right)} \\ 
		\leq & e^{- \frac{|\xi|}{2}\left(t^{-1}|x|^2 \left(\coth(|\xi|) - \csch(|\xi|)\right)\right) } \\
		\rightarrow & 0, \ \mbox{as} \ \ t\rightarrow0^+,
	\end{align*}
	uniformly on $w\in\R^N$. Hence, \eqref{K1-0} holds uniformly for $(w,z) \in \R^{N+k}$. This means that, for each $x\in\R^N$, there exists $t_0>0$ such that the above computations and the nonnegative of the kernel ensure that
	$$0\leq K_1(t^{-\frac{1}{2}}x,t^{-\frac{1}{2}}x-w,z) \leq K_1(0,w,z) , $$
	for all $(w,z) \in \R^{N+k}$ and for all $0<t<t_0$. We finished the proof.
	
\end{proof}

Next, we consider the following Cauchy problem
\begin{eqnarray}\label{cauchy}
	\partial_{t}u-\Delta_{\mathcal{G}}u=0, &  (x,y)\in \R^{N+k}, \ t>0 ,\\
	\hspace*{-1cm}u(x,y,0)=u_0(x,y), &  (x,y)\in \R^{N+k} \label{cauchy0}.
\end{eqnarray}
By direct computations, we can show that 
\begin{equation}\label{sol}
	u(x,y,t)=\int_{\R^{N+k}} K(x,w,y-z,t) u_0(w,z) \di(w,z) , \ (w,z) \in \R^{N+k}, \ t>0,
\end{equation}
satisfies the equation
$$u_{t}(x,y,t)-\Delta_{\mathcal{G}} u(x,y,t)=0.$$ 
In fact, one may check, by the proof of Proposition \ref{Teoremanucleo}, that the spatial derivatives of $K$ are bounded by 
$$\frac{1}{(2\pi)^{\frac{N+2k}{2}}} \int_{\R^k} \left(\frac{|\xi|}{\sinh(|\xi| t)}\right)^{\frac{N}{2}} \mathrm{d}\xi ,$$ 
the derivatives can pass under the sign of the integral, so that the solution \eqref{sol} is smooth. Alternatively, this fact can be also proved by the Fourier transform and the solution of the harmonic oscillator as in \cite{Garofalo-Trallli-22}, where the authors also showed that, for $u_0\in \mathcal{S}(\R^{N+k})$, we have
$$\lim\limits_{(x,y,t)\rightarrow  (x_0,y_0,0)} u(x,y,t)=u_0(x_0,y_0), \ \text{a.e. in} \  \R^{N+k}.$$
Namely, \eqref{sol} is a solution of the Cauchy problem \eqref{cauchy}--\eqref{cauchy0}. The uniqueness follows from the fact that $u(x,y,t)\rightarrow 0$ as $| (x,y)|\rightarrow \infty$, and the following standard computation: the function $u=v-w$ is a solution of \eqref{cauchy} with $u_0=0$. Then, we multiply that equation by $u$ and integrate over $\R^{N+k}$ yielding
\begin{equation*}
	\dfrac{1}{2}\dfrac{d}{dt}\int_{\R^{N+k}}u^2 d(x,y) = -\int_{\R^{N+k}} \left(|\nabla_x u|^2 + |x|^2 |\nabla_{y} u|^2 \right) d(x,y) \leq 0.
\end{equation*}
Hence, $u\equiv 0$, which proves the uniqueness.

\begin{remark}
	Although $K(x,0,y;t)$ is a solution of $u_{t}(x,y,t)-\Delta_{\mathcal{G}} u(x,y,t)=0$, the function 
	$$v(x,y,t)=\int_{\R^{N+k}} K(x-w,0,y-z,t) u_0(w,z) \di(w,z)$$
	is not so, unlike $u$ defined by \eqref{sol}. This fact is related to the fact that the Grushin operator is not translation invariant. We thank Prof. Giulio Tralli for pointing it out to us.
\end{remark}

Therefore, we may define the semigroup associated with Grushin operator. In fact, define $\left(S_{\mathcal{G}}(t)\right)_{t\geq 0}$ by
\begin{equation}\label{semigroup2}
	S_\mathcal{G}(t)\varphi(x,y) = \int_{\R^{N+k}} K(x,w,y-z,t) \varphi(w,z) \di(w,z),
\end{equation}
whenever the integral on the right-hand side makes sense. We will show that $\left(S_{\mathcal{G}}(t)\right)_{t\geq 0}$ is indeed a strongly continuous semigroup in $L^p(\R^{N+k})$, for $1\leq p\leq \infty$. Moreover, we will have $L^p-L^r$ estimates.

\begin{theorem}\label{teo3.6}
	For all $1\leq p \leq \infty$, $S_\mathcal{G}(t):L^p(\R^{N+k})\rightarrow L^p(\R^{N+k})$ is a semigroup. If $1\leq p \leq r\leq\infty$, then
	\begin{equation}\label{semigroup}
		\|S_\mathcal{G}(t)\varphi\|_{L^r(\R^{N+k})}\leq C\|\varphi\|_{L^p(\R^{N+k})}t^{-\frac{N+2k}{2}\left(\frac{1}{p}-\frac{1}{r}\right)};
	\end{equation}
	Furthermore, for $1 \leq p \leq r \leq \infty$, for all $t_0>0$, it is strongly continuous, that is, for $\varphi \in L^r(\R^{N+k})$, it holds
	\begin{equation}\label{continuidadeforte} 
		\|S_\mathcal{G}(t)\varphi-S_\mathcal{G}(t_0)\varphi\|_{L^p}\rightarrow 0,
	\end{equation}
	as $t\rightarrow t_0$. When $p=r<\infty$, then we may take $t_0=0$.
\end{theorem}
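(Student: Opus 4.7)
The plan is to prove the three parts—the smoothing estimate \eqref{semigroup}, the semigroup identity, and strong continuity—in that order. First, for the $L^p\to L^p$ contraction, the symmetry $K(x,w,y;t)=K(w,x,y;t)$ recorded in Remark \ref{remarkHK} together with \eqref{K1} implies $\int_{\R^{N+k}}K(x,w,y-z,t)\di(w,z)=1$ after the change of variable $z\mapsto y-z$. Factoring $K=K^{1/p'}\cdot K^{1/p}$ inside \eqref{semigroup2}, then applying Hölder, Fubini, and \eqref{K1}, gives $\|S_\mathcal{G}(t)\varphi\|_{L^p}\leq\|\varphi\|_{L^p}$ for every $1\leq p\leq\infty$. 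For the endpoint $L^1\to L^r$, I would use Minkowski's integral inequality
$$\|S_\mathcal{G}(t)\varphi\|_{L^r}\leq\int_{\R^{N+k}}|\varphi(w,z)|\,\|K(\cdot,w,\cdot-z,t)\|_{L^r_{x,y}}\di(w,z),$$
and bound the inner norm via translation invariance in $y$ and Theorem \ref{kernelpropertys}, getting $\|K(\cdot,w,\cdot-z,t)\|_{L^r}\leq C t^{-\frac{N+2k}{2}(1-1/r)}$ uniformly in $(w,z)$. Riesz--Thorin interpolation between this endpoint and the $L^r\to L^r$ contraction then produces \eqref{semigroup} throughout the range $1\leq p\leq r\leq\infty$.

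For the semigroup identity $S_\mathcal{G}(t+s)=S_\mathcal{G}(t)S_\mathcal{G}(s)$, I would appeal to the uniqueness of solution of the Cauchy problem \eqref{cauchy}--\eqref{cauchy0} established just before the theorem. For $\varphi$ in a dense subclass such as $\mathcal{S}(\R^{N+k})$, the two functions $u_1(x,y,\tau)=S_\mathcal{G}(s+\tau)\varphi(x,y)$ and $u_2(x,y,\tau)=S_\mathcal{G}(\tau)[S_\mathcal{G}(s)\varphi](x,y)$ both satisfy $\partial_\tau u=\Delta_\mathcal{G} u$ with the common initial datum $S_\mathcal{G}(s)\varphi$ at $\tau=0$, thanks to the construction of the heat kernel and Lemma \ref{approxid}. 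The decay at infinity required by the uniqueness argument is automatic for Schwartz data, so $u_1\equiv u_2$. Evaluating at $\tau=t$ and extending by density using the $L^p$-contraction gives the identity on all of $L^p(\R^{N+k})$.

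For strong continuity, the case $p=r<\infty$ at $t_0=0$ is exactly Lemma \ref{approxid}(2). For $t_0>0$ and $\varphi\in L^p$ with $1\leq p\leq r\leq\infty$, the semigroup identity gives
$$S_\mathcal{G}(t)\varphi-S_\mathcal{G}(t_0)\varphi=S_\mathcal{G}(t_0/2)\bigl[S_\mathcal{G}(t-t_0/2)\varphi-S_\mathcal{G}(t_0/2)\varphi\bigr]$$
for $t$ close to $t_0$, and the $L^r$-contraction reduces the problem to continuity of $\sigma\mapsto S_\mathcal{G}(\sigma)\psi$ in $L^r$ at $\sigma=t_0/2$, with $\psi=\varphi\in L^r$ when $p=r$, or $\psi:=S_\mathcal{G}(t_0/4)\varphi\in L^r$ (by \eqref{semigroup}) when $p<r$. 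One further application of the same decomposition reduces this to the $t_0=0$, $p=r$ case handled by Lemma \ref{approxid}(2) when $r<\infty$; when $r=\infty$, Lemma \ref{approxid}(1) plays the same role, applied to a $\psi$ which is bounded and uniformly continuous thanks to the smoothing effect. The main obstacle throughout is the semigroup identity itself: because the Grushin kernel is not translation invariant (as emphasised in the remark preceding the theorem), the usual Fourier or convolution manipulations fail, and one must route everything through the uniqueness of the Cauchy problem.
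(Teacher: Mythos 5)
Your proposal is correct, and for the semigroup identity and the strong continuity it follows the paper's own route almost verbatim: both arguments go through the uniqueness of the Cauchy problem \eqref{cauchy}--\eqref{cauchy0} (the paper sets $v(s)=u(t+s)$ and invokes uniqueness, exactly as you do, though you are more careful about the density/decay hypothesis that uniqueness requires), and both reduce continuity at $t_0>0$ to the $t_0=0$, $p=r$ case supplied by Lemma \ref{approxid} after factoring out one application of the semigroup. The one place where you genuinely diverge is the smoothing estimate \eqref{semigroup}: the paper obtains it in a single stroke by citing Young's inequality for integral operators (from Sogge) together with the kernel bound of Theorem \ref{kernelpropertys}, whereas you reassemble that inequality by hand --- the Schur-test factorization $K=K^{1/p'}K^{1/p}$ plus the symmetric mass normalization \eqref{K1} for the $L^p\to L^p$ contraction, Minkowski's integral inequality for the $L^1\to L^r$ endpoint, and Riesz--Thorin to fill in the range. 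This is more self-contained and makes visible exactly which properties of $K$ are used (positivity, the symmetry $K(x,w,y)=K(w,x,y)$, translation invariance in $y$ only, and the uniform-in-$x_0$ bound of Theorem \ref{kernelpropertys}), at the cost of being longer; the exponent bookkeeping in your interpolation step does come out to $t^{-\frac{N+2k}{2}(\frac1p-\frac1r)}$ as required. Incidentally, both your write-up and the paper's proof use the kernel estimate with a negative power of $t$, which is what the proof of Theorem \ref{kernelpropertys} actually yields; the plus sign in that theorem's statement is a typo you have silently (and correctly) repaired.
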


\begin{proof}
	Given $\varphi\in L^p(\R^{N+k})$, we can see that $u(x, y,t)=S_{\mathcal{G}}(t)\varphi(x, y)$ is the unique (classical) solution of \eqref{cauchy}--\eqref{cauchy0}, with initial datum $\varphi$. Plus, Young inequality for integral operators in $L^p$ (see e.g. \cite{Sogge}) and Theorem \ref{kernelpropertys} give $u \in L^p(\R^{N+k})$. Next, define $v(x, y,s)=u(x, y,t+s)$. So, $v$ is the solution of
	\begin{equation*}
		\left\{\begin{array}{cc}
			v_s(x, y,s) =\Delta_\mathcal{G}v(x, y,s), \\
			v(x, y,0)=u(x, y,t).
		\end{array}\right.
	\end{equation*}
	By uniqueness, $v(x, y,s)=S_{\mathcal{G}}(s)u(x, y,t)$. Then,
	\begin{equation*}
		S_{\mathcal{G}}(t+s) \varphi(x, y,t) = u(x, y,t+s) = v(x, y,s) = S_{\mathcal{G}}(s)u(x, y,t) = S_{\mathcal{G}}(s)S_{\mathcal{G}}(t)\varphi(x, y,t).
	\end{equation*}
	
	Next, consider $1\leq p,q,r\leq\infty$, with $p^{-1}+q^{-1}=r^{-1}+1$, and $\varphi \in L^p$. From Young inequality for integral operators in $L^p$ (see e.g. \cite{Sogge}) and Theorem \ref{kernelpropertys}, we have
	\begin{equation*}
		\|S_\mathcal{G}(t)\varphi\|_{L^r(\R^{N+k})} %&=\|K \ast \varphi\|_{L^r(\R^{N+k})} \\[.1cm]
		\leq \|K(\cdot,x_0,\cdot,t)\|_{L^q(\R^{N+k})}\|\varphi\|_{L^p(\R^{N+k})} 
		\leq C t^{-\frac{N+2k}{2}\left(\frac{1}{p}-\frac{1}{r}\right)} \|\varphi\|_{L^p(\R^{N+k})}.
	\end{equation*}	
	This is \eqref{semigroup}. If $r=p$, with help of Lemma \ref{approxid}, we easily get
	\begin{equation*}
		\|S_\mathcal{G}(t)\varphi-S_\mathcal{G}(t_0)\varphi\|_{L^p} =\|S_\mathcal{G}(t_0)[S_\mathcal{G}(t-t_0)\varphi-\varphi]\|_{L^p} \leq\|S_\mathcal{G}(t-t_0)\varphi-\varphi\|_{L^p}\rightarrow 0,
	\end{equation*}
	as $t\rightarrow t_0\geq0$. If $r>p$, 
	\begin{equation*}
		\|S_\mathcal{G}(t)\varphi-S_\mathcal{G}(t_0)\varphi\|_{L^r} \leq Ct_0^{-\frac{N+2k}{2}\left(\frac{1}{p}-\frac{1}{r}\right)}\|S_\mathcal{G}(t-t_0)\varphi-\varphi\|_{L^p}\rightarrow 0,
	\end{equation*}
	as $t\rightarrow t_0>0$.
\end{proof}

%-------------------------------------------------------------------

\section{Local solutions}

In this section, we obtain result of existence, uniqueness, continuous dependence and a blow-up alternative for 
\begin{equation}
	\left\{\begin{array}{cc}\label{*}
		u_t-\Delta_\mathcal{G}u=|u|^{\rho-1}u, \ x \in \R^{N+k}, \ t\in (0,T], \\
		\hspace*{-2cm}u(0,x)=u_0(x), \ x \in \R^{N+k},
	\end{array}\right.
\end{equation}em que $u_0 \in L^p(\R^{N+k})$ e $\rho>1$.
The results are similar to those given in \cite{BrezisCaz} and \cite{deAn-Si-Vi-22}, in the case of the classical diffusion equation and its fractional version, respectively.

Let $E=C([0,T];L^p(\R^{N+k}))$ and $X_\alpha$ given by
\begin{equation*}
	X_\alpha=\{u \in C((0,T], E):\|u\|_{X_{\alpha}}=\sup_{t \in (0,T]}t^\alpha\|u(t)\|_{L^r}<+\infty\},
\end{equation*}
with $r=\rho p>p$ and 
\begin{center}
	\fbox{ $\alpha=\frac{N+2k}{2\rho p}\left(\rho-1\right) $.}	
\end{center}

\begin{definition}\label{nonlsemi}
	A continuous function $u:[0,T]\rightarrow L^{p}(\R^{N+k})$ that satisfies
	\begin{equation}\label{branda}
		u(t)=S_{\mathcal{G}}(t)u_0 + \int_{0}^{t}S_{\mathcal{G}}(t-s) |u(s)|^{\rho-1}u(s)ds,
	\end{equation}
	is called a $L^p$-mild solution of $\ref{*}$.
\end{definition}

\begin{remark}
	The semigroup properties of $S_\mathcal{G}(t)$ can be used to prove that a $L^p$-mild solution of $\ref{*}$ defines a nonlinear semigroup $\mathcal{T}_{u_0}(t), t\geq0$, in the sense that $\mathcal{T}_{u_0}(t+s) = \mathcal{T}_{u(t)}u(s)$.
\end{remark}

\begin{theorem}\label{teoexun}
	Let $u_0 \in L^p(\R^{N+k})$, $p>\frac{N+2k}{2}(\rho-1)$, and $\rho>1$. There exists $T>0$ and a unique $L^p$-mild solution $u$ of \eqref{*} in $X_\alpha\cap E$. Moreover, if $u(t)$ and $v(t)$ are mild solutions of $\eqref{*}$, with initial data $u_0$ e $v_0$, respectively, then
	\begin{equation}
		\|u(t)-v(t)\|_{L^p}\leq C\|u_0-v_0\|_{L^p},
	\end{equation}
	for all $t \in [0,T]$.
\end{theorem}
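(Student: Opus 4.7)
The plan is to apply Banach's fixed-point theorem to the map
\begin{equation*}
\Phi(u)(t) = S_\mathcal{G}(t)u_0 + \int_0^t S_\mathcal{G}(t-s)|u(s)|^{\rho-1}u(s)\,\di s
\end{equation*}
on a small closed ball of $X_\alpha$, following the Weissler--Brezis--Cazenave scheme. The exponent $\alpha=\frac{N+2k}{2\rho p}(\rho-1)$ is precisely $\frac{N+2k}{2}\bigl(\frac{1}{p}-\frac{1}{r}\bigr)$ with $r=\rho p$, so Theorem~\ref{teo3.6} gives $t^\alpha\|S_\mathcal{G}(t)u_0\|_{L^r}\leq C\|u_0\|_{L^p}$ at once. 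All the nonlinear estimates below reduce to the Beta integral $\int_0^t(t-s)^{-\alpha}s^{-\alpha\rho}\di s = B(1-\alpha,1-\alpha\rho)\,t^{1-\alpha-\alpha\rho}$, which is finite exactly when $\alpha\rho<1$, and this is equivalent to the hypothesis $p>\frac{N+2k}{2}(\rho-1)$.

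For the self-mapping of $\Phi$ in $X_\alpha$ I will use $\||u(s)|^{\rho-1}u(s)\|_{L^p}=\|u(s)\|_{L^r}^\rho$ together with the $L^p\to L^r$ smoothing of Theorem~\ref{teo3.6}:
\begin{equation*}
\|S_\mathcal{G}(t-s)(|u|^{\rho-1}u(s))\|_{L^r}\leq C(t-s)^{-\alpha}\|u(s)\|_{L^r}^\rho\leq C(t-s)^{-\alpha}s^{-\alpha\rho}\|u\|_{X_\alpha}^\rho;
\end{equation*}
multiplying by $t^\alpha$ and invoking the Beta integral gives $\|\Phi(u)\|_{X_\alpha}\leq C\|u_0\|_{L^p}+CT^{1-\alpha\rho}\|u\|_{X_\alpha}^\rho$. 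For the contraction step, the pointwise bound $\bigl||a|^{\rho-1}a-|b|^{\rho-1}b\bigr|\leq\rho(|a|^{\rho-1}+|b|^{\rho-1})|a-b|$ combined with H\"older applied with exponents $\rho/(\rho-1)$ and $\rho$ yields
\begin{equation*}
\||u|^{\rho-1}u-|v|^{\rho-1}v\|_{L^p}\leq C\bigl(\|u\|_{L^r}^{\rho-1}+\|v\|_{L^r}^{\rho-1}\bigr)\|u-v\|_{L^r},
\end{equation*}
and the same Beta computation produces a Lipschitz constant $\leq CT^{1-\alpha\rho}R^{\rho-1}$ on the ball of radius $R\sim\|u_0\|_{L^p}$; shrinking $T$ makes $\Phi$ a strict contraction on this ball and Banach's theorem delivers a unique fixed point in $X_\alpha$.

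To upgrade the fixed point to an element of $E=C([0,T];L^p)$ and get $u(0)=u_0$, I will use the $L^p\to L^p$ case of Theorem~\ref{teo3.6} (no time decay) to bound $\bigl\|\int_0^t S_\mathcal{G}(t-s)|u|^{\rho-1}u(s)\di s\bigr\|_{L^p}\leq Ct^{1-\alpha\rho}\|u\|_{X_\alpha}^\rho$, which vanishes as $t\to 0^+$; together with the strong continuity of the semigroup this gives continuity at $0$, and continuity at any $t_0\in(0,T]$ follows by splitting the Duhamel integral at $t_0-\varepsilon$, applying strong continuity on the compact piece and the above bound on the tail. Uniqueness in $X_\alpha\cap E$ then follows from the same contraction estimate applied to any two solutions. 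Continuous dependence is obtained by running the identical computation on $w=u-v$, leading to
\begin{equation*}
\|w\|_E+\|w\|_{X_\alpha}\leq C\|u_0-v_0\|_{L^p}+CT^{1-\alpha\rho}\bigl(\|u\|_{X_\alpha}^{\rho-1}+\|v\|_{X_\alpha}^{\rho-1}\bigr)\bigl(\|w\|_E+\|w\|_{X_\alpha}\bigr),
\end{equation*}
and shrinking $T$ to absorb the last factor yields $\|u(t)-v(t)\|_{L^p}\leq C\|u_0-v_0\|_{L^p}$ on $[0,T]$. The only real obstacle is the exponent book-keeping: every step has to respect $r=\rho p$, $\alpha=\tfrac{N+2k}{2}(1/p-1/r)$, and the integrability threshold $\alpha\rho<1$ simultaneously, and the hypothesis $p>\frac{N+2k}{2}(\rho-1)$ is exactly the single condition that makes all three compatible.
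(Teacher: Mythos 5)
Your fixed-point scheme is the same as the paper's: the map $\Lambda$ on a ball of $X_\alpha\cap E$, the identification $\alpha=\frac{N+2k}{2}(\frac1p-\frac1{r})$ with $r=\rho p$, the Beta integral $\int_0^t(t-s)^{-\alpha}s^{-\alpha\rho}\,\di s$ governed by $\alpha\rho<1\iff p>\frac{N+2k}{2}(\rho-1)$, the H\"older estimate for the difference of nonlinearities, and the $t^{1-\alpha\rho}$ bound that gives continuity at $t=0$ in $L^p$. All of that matches and is correct.

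The gap is in your last step. You claim that uniqueness in all of $X_\alpha\cap E$ ``follows from the same contraction estimate applied to any two solutions,'' and that continuous dependence follows by ``shrinking $T$ to absorb the last factor.'' The contraction estimate applied to two \emph{arbitrary} solutions $u,v\in X_\alpha\cap E$ yields
\begin{equation*}
\|u-v\|_{X_\alpha}\leq CT^{1-\alpha\rho}\bigl(\|u\|_{X_\alpha}^{\rho-1}+\|v\|_{X_\alpha}^{\rho-1}\bigr)\|u-v\|_{X_\alpha},
\end{equation*}
and the prefactor need not be less than $1$: $T$ is already fixed by the existence argument, the $X_\alpha$-norms of $u,v$ are arbitrary, and you cannot recover smallness by passing to a subinterval $[0,\tau]$ because $\sup_{0<t\leq\tau}t^\alpha\|u(t)\|_{L^r}$ does not tend to $0$ as $\tau\to0$ for a general element of $X_\alpha$ (it does for $S_\mathcal{G}(t)u_0$, which is why the existence step works, but not for a competitor solution about which you know nothing). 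So absorption proves uniqueness only inside the ball where $\Lambda$ is a contraction, whereas the theorem asserts uniqueness in $X_\alpha\cap E$. The paper closes this by writing the pointwise-in-time inequality
\begin{equation*}
\|u(t)-v(t)\|_{L^{\rho p}}\leq M\int_0^t(t-s)^{-\alpha}s^{-\alpha(\rho-1)}\|u(s)-v(s)\|_{L^{\rho p}}\,\di s
\end{equation*}
and invoking the \emph{singular Gronwall inequality} (checking $f(s)=s^{-\alpha(\rho-1)}\in L^{\tilde p}$ for some $\tilde p\in(\frac{1}{1-\alpha},\frac{1}{\alpha(\rho-1)})$, a nonempty interval precisely because $1-\alpha\rho>0$), which forces $u\equiv v$ on the whole interval with no smallness requirement; the same lemma, applied to $\|u(t)-v(t)\|_{L^{\rho p}}\leq C\|u_0-v_0\|_{L^p}t^{-\alpha}+\dots$, is what delivers the continuous dependence bound $\|u(t)-v(t)\|_{L^p}\leq C\|u_0-v_0\|_{L^p}$ on all of $[0,T]$. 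You need this (or an equivalent bootstrapping argument) to complete the uniqueness and stability claims as stated.
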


\begin{proof}
	Let $ X=X_\alpha\cap E$ be endowed with the norm
	\begin{equation*}
		\|\cdot\|_X=\|\cdot\|_{X_\alpha}+\|\cdot\|_E.
	\end{equation*}
	Set $M\geq \|u_0\|_{L^p}$ and denote by $S$ the closed ball
	\begin{equation*}
		S=\{u \in X; \|u(t)\|_{L^p}\leq MC+1 \ \text{e} \ t^\alpha\|u(t)\|_{L^{\rho p}}\leq MC+1, \ \text{para} \ t \in (0,T]\}.
	\end{equation*}
	Here $C>0$ is the greater between $1$ and the constant $C(p,\rho p,N)$ in Theorem \ref{kernelpropertys}. Define $\Lambda:S\rightarrow S$ by
	\begin{equation}\label{solop}
		\Lambda(u)(t)=S_{\mathcal{G}}(t)u_0 + \int_{0}^{t}S_{\mathcal{G}}(t-s) |u(s)|^{\rho-1}u(s) ds.
	\end{equation}
	Let us prove that $\Lambda$ is well-defined. For $u \in S$, we have from Theorem \ref{kernelpropertys} that
	\begin{align*}
		\|\Lambda(u)\|_{L^{\rho p}}& \leq \|S_{\mathcal{G}}(t)u_0\|_{L^{\rho p}}+\left\|\int_{0}^{t}S_{\mathcal{G}}(t-s) |u(s)|^{\rho-1}u(s) ds\right\|_{L^{\rho p}} \\[.1cm]
		&\leq C\|u_0\|_{L^p}t^{-\alpha} +  C\left(\sup_{t \in (0,T]}t^\alpha\|u(t)\|_{L^{\rho p}}\right)^\rho\int_{0}^{t}(t-s)^{-\alpha}s^{-\alpha\rho}ds \\[.1cm]
		&= C\|u_0\|_{L^p}t^{-\alpha} + C\|u\|_{X_{\alpha}}^{\rho}t^{-\alpha-\alpha\rho+1}\int_{0}^{1}(1-z)^{-\alpha}z^{-\alpha\rho}dz .
	\end{align*}
	Assumption $p>\frac{(N+2k)(\rho-1)}{2}$ implies both $1-\alpha\rho>0$ and $\frac{(N+2k)(1-\rho)}{2\rho p}+1>0$. Then,
	\begin{align*}
		\|\Lambda(u)\|_{X_\alpha}&\leq C\|u_0\|_{L^p}+\tilde{C}\|u\|_{X_{\alpha}}^{\rho}T^{1-\alpha\rho}.
	\end{align*}
	Hence, we can take $T>0$ sufficiently small so that $\|\Lambda(u)\|_{X_\alpha}\leq MC+1$.

	The continuity of $t \mapsto \Lambda u(\cdot,t)$ in $L^{\rho p}(\R^{N+k})$, for $t>0$, follows from \eqref{continuidadeforte} and its boundedness in $X_\alpha$. Its continuity at $t=0$ in $L^{p}(\R^{N+k})$ relies in the following bound:
	\begin{align*}
		\|\Lambda(u)(t)-u_0\|_{L^{p}} & =  \left\|\int_{0}^{t}S_{\mathcal{G}}(t-s) |u(s)|^{\rho-1}u(s) ds\right\|_{L^{p}} \\
		&\leq C \int_{t_0}^{t}\left\|u(s)\right\|_{L^{\rho p}}^{\rho}ds \\[.1cm]
		%&\leq C (M+1)^{\rho}\int_{t_0}^{t}s^{-\alpha\rho}ds \\[.1cm]
		&\leq C (M+1)^{\rho} t^{1-\alpha\rho} \rightarrow 0,
	\end{align*}
	as $t\rightarrow 0^+$. The same computation above can be used to prove that
	$$ \|\Lambda(u)(t)\|_{L^{p}} \leq MC + C(M+1)^\rho T^{1-\alpha \rho} .$$
	Thus, for a sufficiently small $T>0$, $\Lambda$ maps $S$ into itself. Let us prove that $\Lambda$ is a contraction. If $u,v \in S$, then
	\begin{align*}
		t^\alpha\|\Lambda(u)-\Lambda(v)\|_{L^{\rho p}} 
		\leq & C t^{\alpha}\int_{0}^{t}(t-s)^{-\alpha}\|u-v\|_{L^{\rho p}}  \left(\|u(s)\|_{L^{\rho p}}^{\rho-1}+\|v(s)\|_{L^{\rho p}}^{\rho-1}\right) ds \\[.1cm]
		\leq & C t^{\alpha}\left[\left(\sup_{t \in (0,T]}t^\alpha\|u(s)\|_{L^{\rho p}}\right)^{\rho-1}+\left(\sup_{t \in (0,T]}t^\alpha\|v(s)\|_{L^{\rho p}}\right)^{\rho-1}\right] \\[.1cm]
		& \ \ \ \ \ \ \ \ \ \ \ \times \int_{0}^{t}(t-s)^{-\alpha}s^{-\alpha(\rho-1)}\|u-v\|_{L^{\rho p}}ds,
	\end{align*}
	that is,
	\begin{equation}\label{resultcontration1}
		\|\Lambda(u)-\Lambda(v)\|_{X_\alpha}\leq C'T^{1-\alpha\rho}\|u-v\|_{X}.
	\end{equation}
	Analogously, we obtain
	\begin{equation}\label{resultcontration2}
		\|\Lambda(u)-\Lambda(v)\|_E\leq C''T^{1-\alpha\rho}\|u-v\|_{X}.
	\end{equation}
	
	From \eqref{resultcontration1} and \eqref{resultcontration2}, for a sufficiently small $T>0$, we have
	\begin{equation*}
		\|\Lambda(u)-\Lambda(v)\|_X\leq k\|u-v\|_{X}, k\in (0,1),
	\end{equation*}
	By the contraction principle, there exists a fixed point for $\Lambda$ that is the unique $L^p$-mild solution of \eqref{*} in $S$. 
	
	To extend this uniqueness to $X$, we take $u,v$ in $X$ and perform as in the part which we proved that $\Lambda$ is a contraction, we get
	\begin{align*}
		\|u(t)-v(t)\|_{L^{\rho p}}&\leq\left(\|u\|_{X\alpha}^{\rho-1}+\|v\|_{X_\alpha}^{\rho-1}\right) \int_{0}^{t}(t-s)^{-\alpha}s^{-\alpha(\rho-1)}\|u(s)-v(s)\|_{L^{\rho p}}ds \\[.1cm]
		&\leq M\int_{0}^{t}(t-s)^{-\alpha}s^{-\alpha(\rho-1)}\|u(s)-v(s)\|_{L^{\rho p}}ds.
	\end{align*}
	
	To use the singular Gronwall inequality (see e.g. \cite{BrezisCaz}), we shall verify that, for $A=a=0,b=\alpha,f(s)=s^{-\alpha(\rho-1)}$, 
	\begin{itemize}
		\item[i)] $\max\{a,b\}=b=\alpha$;
		\item[ii)] $f$ is nonnegative and, for $\tilde{p}>1$, $f \in L^{\tilde{p}}(0,T)$. Equivalently, $-\alpha(\rho-1) \tilde{p}>-1$ or  $\tilde{p}<1/\alpha(\rho-1)$.
		\item[iii)] The conjugated $\tilde{q}$ of $\tilde{p}$ satisfies $\tilde{q}\max\{a,b\}$<1. Equivalently, $\tilde{p}>\dfrac{1}{1-\alpha}$.
	\end{itemize}	
	
	Then, it is enough to notice that $(\frac{1}{1-\alpha},\frac{1}{\alpha(\rho-1)})$ is not degenerate because
	\begin{equation*}
		(1-\alpha)-\alpha(\rho-1)=1-\alpha-\alpha\rho+\alpha=1-\alpha\rho>0.
	\end{equation*}
	
	From the singular Gronwall inequality, $u\equiv v$ in $X_\alpha$. Also, for $u,v\in E$ mild solutions of \eqref{cauchy}-\eqref{cauchy0}, we have
	\begin{equation*}
		\|u(t)-v(t)\|_{L^p}  \leq 2C(M+1)^{\rho-1}\int_{0}^{t}(t-s)^{-\alpha \rho}\|u(s)-v(s)\|_{L^{ p}}ds.
	\end{equation*}
	Thanks to $p>\frac{N+2k}{2}(\rho-1)$, we have $\alpha\rho<1$, and Grownall's inequality gives that $u\equiv v$ also in $E$. Therefore, the uniqueness in $X$ is proved.

	For the continuous dependence, let $u,v\in X$ given by
	\begin{align}
		u(t)&=S_\mathcal{G}(t)u_0+\int_{0}^{t}S_\mathcal{G}(t)|u(s)|^{\rho-1}u(s)ds,\label{dpu} \\[.1cm]
		v(t)&=S_\mathcal{G}(t)v_0+\int_{0}^{t}S_\mathcal{G}(t)|v(s)|^{\rho-1}v(s)ds.\label{dpv}
	\end{align}
	Performing as above, we get
	\begin{equation}\label{deplin1}
		\|u(t)-v(t)\|_{L^p}\leq C\|u_0-v_0\|_{L^p}+C_3'\sup_{t \in (0,T]}t^\alpha\|u(t)-v(t)\|_{L^{\rho p}}.
	\end{equation}
	On the other hand,
	\begin{align*}
		\|u(t)-v(t)\|_{L^{\rho p}}\leq C_4\|u_0-v_0\|_{L^p}t^{-\alpha}+C_5A\int_{0}^{t}(t-s)^{-\alpha}s^{-\alpha\rho}s^{\alpha}\|u-v\|_{L^{\rho p}}ds.
	\end{align*}
	Singular Grownall inequality implies 
	\begin{equation}\label{deplin2}
		\|u(t)-v(t)\|_{L^{\rho p}}\leq C_7\|u_0-v_0\|_{L^p}t^{-\alpha}.
	\end{equation}
	Accordingly, we combine \eqref{deplin1} and \eqref{deplin2} to obtain
	\begin{equation}
		\|u(t)-v(t)\|_{L^p}\leq L\|u_0-v_0\|_{L^{p}}.
	\end{equation}
\end{proof}

We want to extend the solutions found in Theorem \ref{teoexun}. Thus, we say that a $L^p$-mild solution $u:[0,T]\to L^p(\R^{N+k})$ of \eqref{*}, we say that $\bar{u}:[0,\bar{T}]\to L^p(\R^{N+k})$ is a \textit{continuation} of $u$ on $[0,\bar{T}]$ if $\bar{u}$ is a mild solution, with $\bar{T}>T$ and $u(t)=\bar{u}(t)$ whenever $t\in[0,T]$. We say that an extension $\bar{u}$ defined on $[0,T_m)$ is a \textit{maximal solution} if it cannot be extended.

\begin{theorem}
	Let $u_0 \in L^p(\R^{N+k})$, $p>(N+1)(\rho-1)/2$, $\rho>1$. Then, there exists, a unique continuation of $u$ of \eqref{*}, the solution guaranteed by Theorem \ref{teoexun}. Moreover, there exists a maximal solution such that, if $T_m<\infty$ then
	\begin{equation}\label{limblowup}
		\lim\limits_{t\rightarrow T_m^-}\|u(t)\|_{L^{\rho p}}=+\infty.
	\end{equation}
\end{theorem}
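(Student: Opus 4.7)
The plan is to follow the standard Brezis--Cazenave template, leveraging the semigroup property of $S_{\mathcal G}$ established in Theorem \ref{teo3.6} and the local existence result from Theorem \ref{teoexun}.

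First I would treat uniqueness of continuations. If $\bar u$ and $\tilde u$ are continuations defined on $[0,\bar T]$ and $[0,\tilde T]$ with $\bar T\le \tilde T$, then both lie in $X_\alpha\cap E$ over the common interval and share the initial datum $u_0$, so the uniqueness assertion of Theorem \ref{teoexun} forces $\bar u\equiv\tilde u$ on $[0,\bar T]$. Combined with the nonlinear semigroup property of Remark noted after Definition \ref{nonlsemi}, any two local solutions starting from a common time agree where they are both defined, so continuations can be glued unambiguously.

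For the maximal solution I would set
\begin{equation*}
T_m=\sup\{T>0:\text{ a mild solution of \eqref{*} exists on }[0,T]\},
\end{equation*}
and define $u(t)$ as the common value of every continuation defined past $t$. By the preceding paragraph this is well-defined, agrees with the semigroup relation $\mathcal T_{u_0}(t+s)=\mathcal T_{u(s)}\mathcal T_{u_0}(t)$, and is a mild solution on $[0,T_m)$ that cannot be extended.

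The crux is the blow-up alternative. Assuming $T_m<\infty$ and, for contradiction, that $\liminf_{t\to T_m^-}\|u(t)\|_{L^{\rho p}}<\infty$, I would first upgrade pointwise boundedness to uniform boundedness on a left neighborhood of $T_m$. Picking $t_n\uparrow T_m$ with $\|u(t_n)\|_{L^{\rho p}}\le M$ and restarting the Duhamel formula at $t_n$, the same contraction estimate used in the proof of Theorem \ref{teoexun} (together with Theorem \ref{teo3.6}) produces, on some interval $[t_n,t_n+\tau]$ with $\tau=\tau(M)$ independent of $n$, the bound $\sup_{t\in[t_n,t_n+\tau]}\|u(t)\|_{L^{\rho p}}\le 2MC+1$. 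Choosing $n$ large enough that $t_n+\tau>T_m$ gives uniform control of $\|u(\cdot)\|_{L^{\rho p}}$ on $[T_m-\delta,T_m)$. Then, for $T_m-\delta<s<t<T_m$,
\begin{equation*}
u(t)-u(s)=(S_{\mathcal G}(t-s)-I)u(s)+\int_s^t S_{\mathcal G}(t-\tau)|u(\tau)|^{\rho-1}u(\tau)\di\tau,
\end{equation*}
where the first term tends to zero in $L^p$ by the strong continuity \eqref{continuidadeforte} and the second is bounded in $L^p$ by $C(2MC+1)^\rho|t-s|$ via Theorem \ref{teo3.6}. Hence $\{u(t)\}$ is Cauchy in $L^p$ as $t\to T_m^-$ and converges to some $w\in L^p(\R^{N+k})$; applying Theorem \ref{teoexun} with initial datum $w$ at time $T_m$ produces a continuation of $u$ strictly past $T_m$, contradicting maximality.

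The main obstacle I anticipate is precisely the first half of the last step: promoting a bound of $\|u(t_n)\|_{L^{\rho p}}$ along a sequence to a uniform bound on a full left neighborhood of $T_m$. This requires reading off from the proof of Theorem \ref{teoexun} that the local existence time depends only on the $L^{\rho p}$-norm at the restart time (not separately on $\|u(t_n)\|_{L^p}$, which may be delicate near $T_m$), which in turn relies on $p<\rho p$ and on $1-\alpha\rho>0$, hence on the hypothesis $p>\frac{N+2k}{2}(\rho-1)$.
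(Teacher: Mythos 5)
Your overall architecture coincides with the paper's: glue local solutions via the semigroup/Duhamel identity to build the maximal solution, then argue by contradiction that boundedness of $\|u(t)\|_{L^{\rho p}}$ near $T_m$ forces $u(t)$ to be Cauchy as $t\to T_m^-$, so the solution can be restarted past $T_m$. (The paper invokes Zorn's lemma where you take a supremum; that is cosmetic.) The one genuine divergence is in how the negation of \eqref{limblowup} is handled, and here your version is actually the more faithful one: the paper negates the limit as ``$\|u(t)\|_{L^{\rho p}}\le c$ for all $t\in[0,T_m)$,'' which only yields the $\limsup$ version of the alternative, whereas you correctly start from $\liminf_{t\to T_m^-}\|u(t)\|_{L^{\rho p}}<\infty$ and insert an upgrading step. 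That step is the crux and you leave it as a flagged obstacle, so let me point out that it does close, but not quite the way you suggest: rather than re-reading the proof of Theorem \ref{teoexun} with datum measured in $L^p$, either apply Theorem \ref{teoexun} with $\rho p$ in place of $p$ (admissible since $\rho p>p>\tfrac{N+2k}{2}(\rho-1)$), so the restart time depends only on $\|u(t_n)\|_{L^{\rho p}}\le M$, or, more directly, run the Duhamel formula from $t_n$ in $L^{\rho p}$: since $|u|^{\rho-1}u\in L^{p}$ when $u\in L^{\rho p}$ and $\|S_\mathcal{G}(\tau)\|_{L^p\to L^{\rho p}}\le C\tau^{-\alpha}$ with $\alpha<1$, one gets $f(t)\le CM+C't^{1-\alpha}f(t)^{\rho}$ for $f(t)=\sup_{0<s\le t}\|u(t_n+s)\|_{L^{\rho p}}$, and a continuity argument gives $f\le 2CM$ on a time interval $\tau(M)$ independent of $n$. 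Either route avoids any appeal to $\|u(t_n)\|_{L^p}$, which, as you suspect, is not controlled a priori along the sequence.

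One smaller technical point: in your Cauchy estimate the term $(S_\mathcal{G}(t-s)-I)u(s)$ has a moving argument, and \eqref{continuidadeforte} gives convergence to zero only for a \emph{fixed} element of $L^p$; to make this uniform you would need precompactness of $\{u(s)\}$, which is what you are trying to prove. The paper's decomposition avoids this by anchoring at a fixed time, writing $u(t_n)-u(t_m)=S_\mathcal{G}(t_m)[S_\mathcal{G}(t_n-t_m)-I]u_0+[S_\mathcal{G}(t_n-t_m)-I]\int_0^{t_m}S_\mathcal{G}(t_m-s)|u(s)|^{\rho-1}u(s)\di s+\int_{t_m}^{t_n}S_\mathcal{G}(t_n-s)|u(s)|^{\rho-1}u(s)\di s$, and applying strong continuity to the two fixed vectors $u_0$ and the accumulated integral. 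You should adopt that decomposition (or anchor at any fixed $s_0<T_m$). With these two repairs your argument is complete and in fact proves the stated $\lim$, not merely the $\limsup$.
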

\begin{proof}
	Let $u$ be the solution guaranteed by Theorem \ref{teoexun} and defined on $[0,T_0]$. Consider the problem
	\begin{equation}\label{auxcont}
		\left\{\begin{array}{cc}
			v_t-\Delta_{\mathcal{G}}v=|v|^{\rho-1}v, \\
			v(0)=u(T_0).
		\end{array}\right.
	\end{equation}
	
	From Theorem \ref{teoexun} again, there is a unique solution $u_1$ of \eqref{auxcont} in $X$ and defined on $[0,T_1]$. Define $\tilde{u}$ by
	\begin{equation}\label{colagemsolu}
		\tilde{u}(t)=\left\{\begin{array}{cc}
			u(t), & \ \text{if} \ t \in [0,T_0], \\
			u_1(t-T_0), &  \ \text{if} \ t \in [T_0,T_1], T_1=T_0+\delta_1.
		\end{array}\right.
	\end{equation}
	It is clear that $\tilde{u}$ continuous and it is not difficult to prove that $\tilde{u}$ is a mild solution for \eqref{*}. Indeed, for $t \in [T_0,T_1]$, $t-T_0 \in [0, \delta_1]$ and
	\begin{align*}
		\tilde{u}(t) %&=u_1(t-T_0) \\[.1cm]
		&=S_\mathcal{G}(t-T_0)u(T_0)+\int_{0}^{t-T_0}S_\mathcal{G}(t-s)|u_1(s)|^{\rho-1}u_1(s)ds \\[.1cm]
		&=S_\mathcal{G}(t)u_0+\int_{0}^{T_0}S_\mathcal{G}(t-s)|u(s)|^{\rho-1}u(s)ds \\[.1cm]
		& \ \ \ +\int_{0}^{t-T_0}S_\mathcal{G}(t-T_0-s)|u_1(s)|^{\rho-1}u_1(s)ds \\
		&=S_\mathcal{G}(t)u_0+\int_{0}^{t}S_\mathcal{G}(t-s)|\tilde{u}(s)|^{\rho-1}\tilde{u}(s)ds.
	\end{align*}
	This proves the claim. Repeating the reasoning, we have $T_0<T_1<\cdots <T_j< \cdots$. By Zorn's Lemma, $A$ has a maximal element $T_m$, from which follows the existence of the maximal solution of $\eqref{*}$.
	
	Next, assume $T_m<\infty$ but \eqref{limblowup} does not hold, that is, there exists a constant $c>0$ such that $\|u(t)\|_{L^{\rho p}}\leq c<\infty$ for all $t \in [0,T_m)$. Given a sequence, $(t_n)_{n\in \N}\subset \R_+$, $t_n\rightarrow T_m^-$, we take $(u(t_n))_{n \in \N}$. We will show that this is a Cauchy sequence. For $0<t_m<t_n<T_m$, we can use the properties of the semigroup $S_{\mathcal{G}}(t)$ in Theorem :
	\begin{align*}
		u(t_n)-u(t_m)& = S_\mathcal{G}(t_m) [S_\mathcal{G}(t_n-t_m)-I]u_0+\int_{t_m}^{t_n}S_\mathcal{G}(t_n-s)|u(s)|^{\rho-1}u(s)ds \\[.1cm]
		& \ \ \ + [S_\mathcal{G}(t_n-t_m) - I] \int_{0}^{t_m} S_\mathcal{G}(t_m-s)]|u(s)|^{\rho-1}u(s)ds  .
	\end{align*}

	Since $u_0\in L^p(\R^{N+k}) \cap L^{\rho p}(\R^{N+k})$ for $t>0$ and we proved in Theorem \ref{teoexun} that $t\mapsto \int_{0}^{t_m} S_\mathcal{G}(t_m-s)]|u(s)|^{\rho-1}u(s)ds $ also is in $ L^p(\R^{N+k}) \cap L^{\rho p}(\R^{N+k})$, we can use the strong continuity of the semigroup $S_\mathcal{G}(t)$ given in \eqref{continuidadeforte} to conclude that the first and third terms of the right-hand side above go to zero as $m,n\rightarrow \infty$.
	
	Plus, notice that
	\begin{align*}
		\left\|\int_{t_m}^{t_n}S_\mathcal{G}(t_n-s)|u(s)|^{\rho-1}u(s)ds\right\|_{L^{\rho p}} 
		&\leq\int_{t_m}^{t_n}C(t_n-s)^{-\alpha}\|u(s)\|^\rho_{L^{\rho p}}ds \\[.1cm]
		%	&\leq A^\rho\int_{t_m}^{t_n}C(t_n-s)^{-\alpha}s^{-\alpha\rho}ds \\[.1cm]
		&\leq CA^\rho  t_n^{-\alpha-\alpha\rho+1}\int_{t_m/t_n}^{1}(1-z)^{-\alpha}z^{-\alpha\rho}dz \rightarrow 0,
	\end{align*}
	as $n,m \rightarrow \infty$. Then,
	\begin{equation*}
		\|u(t_n,\cdot)-u(t_m,\cdot)\|_{L^{\rho p}}\rightarrow 0, \ \text{as}\ n,m \rightarrow \infty.
	\end{equation*}
	Analogously,
	\begin{align*}
		\left\|\int_{t_m}^{t_n}S_\mathcal{G}(t_n-s)|u(s)|^{\rho-1}u(s)ds\right\|_{L^{p}}
		&\leq C(M+1)^\rho\int_{t_m}^{t_n}s^{-\alpha\rho}ds \\[.1cm]
		&\leq C(M+1)^\rho(t_n^{1-\alpha\rho}-t_m^{1-\alpha\rho})\rightarrow 0,
	\end{align*}
	as $n,m \rightarrow \infty$, yielding
	\begin{equation*}
		\|u(t_n,\cdot)-u(t_m,\cdot)\|_{L^{p}}\rightarrow 0, \ \text{as}\ n,m \rightarrow \infty.
	\end{equation*}
	Accordingly,
	\begin{equation*}
		\|u(t_n,\cdot)-u(t_m,\cdot)\|_{X}\rightarrow 0, \ \text{as}\ n,m \rightarrow \infty.
	\end{equation*}
	It shows that $(u(t_n))_{n\in\N}$ is a Cauchy sequence in the Banach space $X$, which implies the existence of the limit in $X$
	\begin{equation*}
		\lim_{n\rightarrow \infty}u(t_n)=u(T_m).
	\end{equation*}
	It contradicts the maximality of $T_m$.
\end{proof}

%-------------------------------------------------------------------
\section{Global solutions}

In this section, we assume $p=\frac{N+2k}{2}(\rho-1)$ to prove global existence for $\eqref{*}$. In this case, $r \in (p,\rho p)$ and
\begin{center}
	\begin{equation}\label{alpha2}
		\boxed{		\alpha=\frac{N+2k}{2}\left(\frac{1}{p}-\frac{1}{r}\right). 	}
	\end{equation}
\end{center}

We need the following lemma.
\begin{lemma}\label{lemmaSg}
	Let $1\leq p<r<\infty$. Then
	\begin{equation}
		\lim\limits_{T\rightarrow 0^+}\sup_{t \in (0,T)}t^{\alpha}\|S_\mathcal{G}(t)\varphi\|_{L^r}=0
	\end{equation}
	uniformly to $\varphi$ in bounded sets of $L^p(\R^{N+k})$.
\end{lemma}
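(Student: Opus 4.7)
The plan is a standard density argument. The starting observation is that for $\psi$ in the subspace $L^p \cap L^r$, Theorem~\ref{teo3.6} applied with equal exponents yields
\[
\|S_\mathcal{G}(t)\psi\|_{L^r} \leq \|\psi\|_{L^r},
\]
since \eqref{semigroup} with $p=r$ carries no time factor (it reduces to the contractive bound coming from $\|K(\cdot,x_0,\cdot;t)\|_{L^1}=1$). Multiplying by $t^\alpha$ gives $t^\alpha \|S_\mathcal{G}(t)\psi\|_{L^r} \leq t^\alpha \|\psi\|_{L^r} \to 0$ as $t \to 0^+$. This settles the claim on the dense subspace $L^p \cap L^r$, which contains $C_c^\infty(\R^{N+k})$ and is therefore dense in $L^p$ for $1 \leq p < \infty$.

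For an arbitrary $\varphi \in L^p$ and $\varepsilon > 0$, I would pick $\psi \in L^p \cap L^r$ with $C\|\varphi - \psi\|_{L^p} < \varepsilon/2$, where $C$ is the constant in \eqref{semigroup}. The genuine $L^p$--$L^r$ estimate then produces
\[
t^\alpha \|S_\mathcal{G}(t)(\varphi - \psi)\|_{L^r} \leq C\|\varphi - \psi\|_{L^p} < \varepsilon/2
\]
uniformly in $t > 0$. Combining with the previous step and taking $T > 0$ small enough that $T^\alpha \|\psi\|_{L^r} < \varepsilon/2$, the triangle inequality yields $\sup_{t \in (0,T)} t^\alpha \|S_\mathcal{G}(t)\varphi\|_{L^r} < \varepsilon$, which gives the limit at each fixed $\varphi$.

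For the uniformity over a family $B \subset L^p$, the plan is to promote the previous argument by a finite-covering step: cover $B$ (which, for the purposes of the application to global existence, may be taken totally bounded in $L^p$) by finitely many $L^p$-balls of radius $\varepsilon/(4C)$ around centers $\psi_1,\ldots,\psi_M \in L^p \cap L^r$, and then choose $T$ satisfying $T^\alpha \max_j \|\psi_j\|_{L^r} < \varepsilon/2$. Combining the two bounds via the triangle inequality gives $\sup_{\varphi \in B}\sup_{t \in (0,T)} t^\alpha \|S_\mathcal{G}(t)\varphi\|_{L^r} < \varepsilon$. I expect the main obstacle to be precisely this uniformity step, since the approximating $\psi$ genuinely depends on $\varphi$: the pointwise limit is free from density, but upgrading it to a uniform estimate cannot be done without some extra compactness, hence the finite covering.
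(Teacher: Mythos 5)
Your treatment of the pointwise limit is essentially identical to the paper's proof. The paper also takes a sequence $(\varphi_n)\subset C_0^\infty(\R^{N+k})$ converging to $\varphi$ in $L^p$, splits $S_\mathcal{G}(t)\varphi = S_\mathcal{G}(t)(\varphi-\varphi_n)+S_\mathcal{G}(t)\varphi_n$, controls the first piece by $C\|\varphi-\varphi_n\|_{L^p}$ via the $L^p$--$L^r$ estimate \eqref{semigroup} (the paper's displayed inequality shows an $L^r$ norm of the difference, which is evidently a typo for $L^p$), and the second by $t^\alpha\|\varphi_n\|_{L^r}$ via the non-smoothing case of \eqref{semigroup}. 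So for each fixed $\varphi$ the two arguments coincide.

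Where you diverge is the uniformity claim, and your suspicion there is well founded: the paper's proof does not address it at all, since the approximant $\varphi_n$ (hence the admissible $T$) depends on $\varphi$. In fact, uniformity over \emph{bounded} sets of $L^p$ is false. Using the parabolic scaling \eqref{symmetry} one checks that $\varphi_\lambda(x,y)=\lambda^{(N+2k)/p}\varphi(\lambda x,\lambda^{2}y)$ satisfies $\|\varphi_\lambda\|_{L^p}=\|\varphi\|_{L^p}$ while
\begin{equation*}
\sup_{t\in(0,T)}t^{\alpha}\|S_\mathcal{G}(t)\varphi_\lambda\|_{L^r}=\sup_{s\in(0,\lambda^{2}T)}s^{\alpha}\|S_\mathcal{G}(s)\varphi\|_{L^r},
\end{equation*}
which does not tend to $0$ as $T\to0^+$ uniformly in $\lambda$ (take $\lambda^2=1/T$). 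Your finite-covering step therefore proves the correct, weaker statement: uniformity on totally bounded subsets of $L^p$. That is more than the paper establishes, and it is all that is actually used later (the lemma is invoked in the global-existence proof for a single fixed $u_0$). The only caveat is that neither your argument nor the paper's proves the lemma literally as stated, because as stated it does not hold; the statement should be read either pointwise in $\varphi$ or with ``bounded'' replaced by ``totally bounded''.
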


\begin{proof}
	Given $\varphi \in L^p(\R^{N+k})$, there exists a sequence $(\varphi_n)_{n \in \N}\subset C_0^\infty(\R^{N+k})$ converging to $\varphi$. Using \eqref{semigroup}, we have
	\begin{align*}
		t^\alpha\|S_\mathcal{G}(t)\varphi\|_{L^r}
		%&\leq t^\alpha\|S_\mathcal{G}(t)(\varphi-\varphi_n)\|_{L^r}+t^\alpha\|S_\mathcal{G}(t)\varphi_n\|_{L^r} \\[.1cm]
		%		&\leq C\|\varphi-\varphi_n\|_{L^r}+t^\alpha\|\varphi_n\|_{L^r} \\[.1cm]
		&\leq C\|\varphi-\varphi_n\|_{L^r}+t^\alpha M, \ \ \forall n\in \N, t\in(0,T).
	\end{align*}
	
	Taking the limit as $n \rightarrow \infty$ and the supremum, we get
	%	\begin{equation*}
		%		t^\alpha\|S_\mathcal{G}(t)\varphi\|_{L^r}\leq T^\alpha M.
		%	\end{equation*}Daí,
	%	\begin{equation*}
		%		\sup_{t \in (0,T)}t^{\alpha}\|S_\mathcal{G}(t)\varphi\|_{L^r}\leq T^\alpha M .
		%	\end{equation*}Aplicando o limite $T \rightarrow 0^+$, segue do Teorema do Sanduíche que
	\begin{equation*}
		\lim\limits_{T\rightarrow 0^+}\sup_{t \in (0,T)}t^{\alpha}\|S_\mathcal{G}(t)\varphi\|_{L^r}=0.
	\end{equation*}
\end{proof}

For the next result, in the spaces $E$ and $X_\alpha$, we will consider $T=\infty$.
\begin{theorem}
	Let $p=\frac{N+2k}{2}(\rho-1)$ and $u_0\in L^p(\R^{N+k})$, with $\|u_0\|_{L^p}$ sufficiently small. Then, there exists a global $L^p$-mild solution $u\in X_\alpha\cap E$.
\end{theorem}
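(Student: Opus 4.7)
The plan is to apply Banach's contraction principle to the operator $\Lambda$ in \eqref{solop} on a small closed ball of $X=X_\alpha\cap E$ with $T=\infty$, whose radius is taken proportional to $\|u_0\|_{L^p}$. No time-truncation argument is available here because $p$ is scale-invariant, so smallness of the initial datum is what must make the nonlinear term a uniformly small perturbation on the whole half-line.

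First I would record the ``critical identity'' forced by $p=\frac{N+2k}{2}(\rho-1)$: with $\alpha$ as in \eqref{alpha2} and $r\in (p,\rho p)$, a direct computation gives
$$1-\alpha\rho-\frac{(N+2k)(\rho-1)}{2r}=-\alpha,\qquad 0<\alpha\rho<1,\qquad 0<\frac{(N+2k)(\rho-1)}{2r}<1,$$
so that the beta integral $B=\int_0^1(1-z)^{-(N+2k)(\rho-1)/(2r)}\,z^{-\alpha\rho}\di z$ is finite. Combined with \eqref{semigroup} applied to the pair $(p,r)$ for $S_\mathcal{G}(t)u_0$ and to $(r/\rho,r)$ for the Duhamel term (valid since $r\leq \rho p$), this produces, \emph{uniformly in} $t>0$,
$$t^\alpha\|\Lambda(u)(t)\|_{L^r}\leq C\|u_0\|_{L^p}+CB\,\|u\|_{X_\alpha}^\rho,$$
and, using \eqref{semigroup} with the pairs $(p,p)$ and $(r/\rho,p)$, an analogous bound $\|\Lambda(u)(t)\|_{L^p}\leq \|u_0\|_{L^p}+C'B'\|u\|_{X_\alpha}^\rho$. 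Setting $M=2C\|u_0\|_{L^p}$ and taking $\|u_0\|_{L^p}$ small, the ball $\mathcal{B}_M=\{u\in X:\|u\|_X\leq M\}$ becomes invariant under $\Lambda$. Contraction follows from the pointwise inequality $\bigl||u|^{\rho-1}u-|v|^{\rho-1}v\bigr|\leq \rho(|u|^{\rho-1}+|v|^{\rho-1})|u-v|$ and the same beta integral, yielding $\|\Lambda u-\Lambda v\|_X\leq C''M^{\rho-1}\|u-v\|_X$, a strict contraction for $\|u_0\|_{L^p}$ small enough.

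The step I expect to be the main obstacle is the $L^p$-continuity of the fixed point $u$ at $t=0$: the Duhamel $L^p$-bound above is \emph{constant in $t$}, precisely because $1-\gamma-\alpha\rho=0$ in the critical case with $\gamma=\frac{N+2k}{2}(\frac{\rho}{r}-\frac{1}{p})$, so a raw size estimate cannot drive it to zero. I would handle this by working inside the smaller closed subspace
$$X_\alpha^0=\bigl\{u\in X_\alpha\cap E:\ \eta(t):=\sup_{0<s<t}s^\alpha\|u(s)\|_{L^r}\longrightarrow 0\text{ as }t\to 0^+\bigr\},$$
which is preserved by $\Lambda$: Lemma \ref{lemmaSg} places $t\mapsto S_\mathcal{G}(t)u_0$ in $X_\alpha^0$, while the Duhamel estimates, reorganised by replacing $\|u(s)\|_{L^r}^\rho$ by $s^{-\alpha\rho}\eta(t)^\rho$ inside the integral, show that the Duhamel part of $\Lambda(u)$ inherits the $o(1)$ decay of $\eta(t)^\rho$. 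The same substitution in the $L^p$-bound gives $\bigl\|\int_0^t S_\mathcal{G}(t-s)|u|^{\rho-1}u\di s\bigr\|_{L^p}\leq C'B'\eta(t)^\rho\to 0$ as $t\to 0^+$; combined with the $L^p$-strong continuity of $S_\mathcal{G}$ at the origin from Theorem \ref{teo3.6}, this yields $\|u(t)-u_0\|_{L^p}\to 0$. Continuity at every $t_0>0$ is routine via \eqref{continuidadeforte} and dominated convergence on $(0,t_0)$, and the resulting fixed point is the desired global mild solution $u\in X_\alpha\cap E$.
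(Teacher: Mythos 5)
Your proposal is correct and follows essentially the same route as the paper: a Banach fixed point argument for $\Lambda$ on a small ball of $X_\alpha\cap E$ with $T=\infty$, driven by the critical identity $1-\alpha\rho-\frac{(N+2k)(\rho-1)}{2r}=-\alpha$ and the smallness of $\|u_0\|_{L^p}$. Your explicit treatment of the $L^p$-continuity at $t=0$ via the subspace where $\sup_{0<s<t}s^\alpha\|u(s)\|_{L^r}\to 0$ is exactly the role Lemma \ref{lemmaSg} plays in the paper, which only sketches this step.
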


\begin{proof}
	Let $\mathcal{B}_\delta$ be the closed ball of $X_\alpha\cap E$ of radius $\delta>0$ and  $\Lambda:\mathcal{B}_\delta \to \mathcal{B}_\delta$ given by \eqref{solop}. Analogously to the proof of Theorem \ref{teoexun}, for any $u\in \mathcal{B}_\delta$, we can use Theorem \ref{teo3.6} to get
	\begin{align*}
		\|\Lambda(u)(t)\|_{L^p} 	& \leq \|u_0\|_{L^p}+C\left(\sup_{t \in (0,T)}t^\alpha\|u(t)\|_{L^r}\right)^{\rho} \int_{0}^{t} (t-s)^{\frac{N+2k}{2}\left(\frac{\rho}{r} - \frac{1}{p}\right)} s^{-\alpha\rho}ds.
	\end{align*}
	The assumptions $r\in(p,\rho p)$ and $p=\frac{N+2k}{2}(\rho-1)$ make $\frac{N+2k}{2}\left(\frac{\rho}{r}  - \frac{1}{p}\right) > - 1$ and, because of \eqref{alpha2}, we have
	$$1 -\frac{N+2k}{2}\left(\frac{\rho}{r} - \frac{1}{p}\right) - \alpha\rho = 0 .$$
	Then
	\begin{equation*}
		\|\Lambda(u)(t)\|_{L^p} \leq \|u_0\|_{L^p} + \tilde{C}\delta^\rho.
	\end{equation*}
	
	Similarly,
	\begin{align*}
		\|\Lambda(u)(t)\|_{L^r} 
		&\leq \|u_0\|_{L^p}t^{-\alpha}+C\delta^{\rho}t^{-\frac{N+2k}{2r}(\rho-1)-\alpha\rho+1}\int_{0}^{1}(1-\tau)^{-\frac{N+2k}{2r}(\rho-1)}\tau^{-\alpha\rho}d\tau . %\\[.1cm]
		%&\leq \|u_0\|_{L^p}t^{-\alpha}+C\delta^{\rho}t^{-\alpha}B\left(1-\frac{3}{2r}(\rho-1),1-\alpha\rho\right),
	\end{align*}
	Again, we notice that $r\in(p,\rho p)$ and the choice for $p$ make $\frac{N+2k}{2r}(\rho-1)<1$, $\alpha \rho<1$, and $\alpha=\frac{N+2k}{2r}(\rho-1)+\alpha\rho-1$. Thus,
	\begin{equation*}
		t^\alpha\|\Lambda(u)(t)\|_{L^r} \leq \|u_0\|_{L^p} + \tilde{C_1}\delta^\rho
	\end{equation*}
	
	If $ \|u_0\|_{L^p}<\delta/2$ and $\delta>0$ is so small that $4(\tilde{C}+\tilde{C_1})\delta^{\rho-1}<1$, then
	\begin{equation}
		\|\Lambda u \|_X< \frac{3\delta}{4} .
	\end{equation}
	It is similar to the proof of Theorem \ref{teoexun} to prove that $t\mapsto \Lambda u (t)$ is in $E\cap X_\alpha$, however Lemma \ref{lemmaSg} must be invoked. Therefore, $\Lambda$ is well-defined.

	Moreover, for $u,v \in \mathcal{B}_\delta$, we have
	\begin{align*}
		& \|\Lambda(u)(t)-\Lambda(v)(t)\|_{L^r} \\
		&\leq C\int_{0}^{t}(t-s)^{-\frac{(N+2k)(\rho-1)}{2r}}\|u(s)-v(s)\|_{L^r}\left(\|u(s)\|_{L^r}^{\rho-1}+\|v(s)\|_{L^r}^{\rho-1}\right)ds \\[.1cm]
		%&\leq C\left[\left(\sup_{t \in (0,T)}t^\alpha\|u(t)\|_{L^r}\right)^{\rho-1}+\left(\sup_{t \in (0,T)}t^\alpha\|v(t)\|_{L^r}\right)^{\rho-1}\right] \\[.1cm]
		& \leq \sup_{t >0}t^\alpha\|u(t)-v(t)\|_{L^r} \int_{0}^{t}(t-s)^{-\frac{(N+2k)(\rho-1)}{2r}}s^{-\alpha(\rho-1)-\alpha}ds \\[.1cm]
		&\leq 2C\delta^{\rho-1}t^{-\frac{(N+2k)(\rho-1)}{2r}-\alpha\rho+1}\|u-v\|_{X_\alpha}B\left(1-\frac{(N+2k)(\rho-1)}{2r},1-\alpha\rho\right),
	\end{align*}
	that is,
	\begin{equation}\label{rhoigual1}
		\|\Lambda(u)-\Lambda(v)\|_{X_\alpha}\leq 2C\delta^{\rho-1}\|u-v\|_{X}.
	\end{equation}
	One can proceed similarly to show
	\begin{equation}\label{rhoigual2}
		\|\Lambda(u)(t)-\Lambda(v)(t)\|_{L^p}\leq C'\delta^{\rho-1}\|u-v\|_{X}, t>0.
	\end{equation}
	Hence, from \eqref{rhoigual1} and \eqref{rhoigual2}, $\Lambda$ is a contraction. The Banach fixed point theorem yields a unique mild solution for \eqref{sheatG}--\eqref{sheatG0} in $\mathcal{B}_\delta \subset E\cap X_\alpha$.

\end{proof}

\begin{remark}
	\begin{enumerate}
		\item If the smallness of the norm of the initial datum is replaced with the smallness of the time, we could prove the existence of a local solution \eqref{sheatG}--\eqref{sheatG0}, with $p=\frac{N+2k}{2}(\rho-1)$ and arbitrary size of the initial datum.
		\item 	Let $u,v$ be solutions in $X$ starting at $u_0,v_0$, respectively. In the same manner we obtained \eqref{rhoigual2}, we can show that
		\begin{equation*}
			\|u(t)-v(t)\|_{X_\alpha}+\|u(t)-v(t)\|_{L^p}\leq \tilde{L}\|u_0-v_0\|_{L^p}.
		\end{equation*}
		\item From the positivity of the heat kernel \eqref{HK}, on can obtain that the semigroup $(S_\mathcal{G}(t))_{t\geq0}$ preserves positivity. Then, we can follow the same argument in \cite[Th. 5.1]{deAn-Si-Vi-22} to infer that the solution we find in Theorems \ref{teoexun} and \ref{teo3.6} are positive, provided the initial data is nonnegative.
	\end{enumerate}
\end{remark}

%------------------------------------------------------------------


\begin{thebibliography}{99}
	
	
	\bibitem{Ar-B-08} Arcozzi, Nicola; Baldi, Annalisa. From Grushin to Heisenberg via an isoperimetric problem. J. Math. Anal. Appl. \textbf{340} (2008), no.1, 165--174.
	
	\bibitem{Ba-Furutani-15} Bauer, Wolfram; Furutani, Kenro; Iwasaki, Chisato. Fundamental solution of a higher step Grushin type operator. Adv. Math. \textbf{271} (2015), 188--234.
	
	\bibitem{Arous-89} Ben Arous, Gérard. Développement asymptotique du noyau de la chaleur hypoelliptique sur la diagonale. Ann. Inst. Fourier (Grenoble) \textbf{39} (1989), no.1, 73--99.
	
	\bibitem{Bi-Br-23} S. Biagi and M. Bramanti, Global Gaussian estimates for the heat kernel of homogeneous sums of squares, Potential Anal. {\bf 59} (2023), no.~1, 113--151.
	
	
	\bibitem{BrezisCaz} H. Brezis\ and\ T. Cazenave, A nonlinear heat equation with singular initial data, J. Anal. Math. {\bf 68} (1996), 277--304.
	
	\bibitem{Calin-11} Calin, Ovidiu; Chang, Der-Chen; Furutani, Kenro; Iwasaki, Chisato. \textit{Heat kernels for elliptic and sub-elliptic operators. Methods and techniques}. Appl. Numer. Harmon. Anal. Birkhäuser/Springer, New York, 2011. xviii+433 pp.
	
	\bibitem{Chang-Gaveau-09} Chang, Chin-Huei; Chang, Der-Chen; Gaveau, Bernard; Greiner, Peter; Lee, Hsuan-Pei. Geometric analysis on a step $2$ Grusin operator. Bull. Inst. Math. Acad. Sin. \textbf{4} (2009), no.2, 119--188.
	
	\bibitem{Chang-Li-12} Chang, Der-Chen; Li, Yutian. SubRiemannian geodesics in the Grushin plane. J. Geom. Anal. \textbf{22} (2012), no.3, 800--826.
	
	\bibitem{Chang-Greiner-13} Chang, Chin-Huei; Chang, Der-Chen; Greiner, Peter; Lee, Hsuan-Pei. The positivity of the heat kernel on Heisenberg group. Anal. Appl. (Singap.) \textbf{11} (2013), no.5, 1350019, 15 pp.
	
	\bibitem{Chang-Li-15} Chang, Der-Chen; Li, Yutian. Heat kernel asymptotic expansions for the Heisenberg sub-Laplacian and the Grushin operator. Proc. A. \textbf{471} (2015), no.2175, 20140943, 19 pp.
	
	\bibitem{Chow-39} Chow, Wei-Liang. Über Systeme von linearen partiellen Differentialgleichungen erster Ordnung. Math. Ann. \textbf{117} (1939), 98--105.
	
	\bibitem{DAmb} D'Ambrosio, Lorenzo. Hardy inequalities related to Grushin type operators. Proc. Amer. Math. Soc. \textbf{132} (2004), no.3, 725--734.	
	
	\bibitem{deAn-Si-Vi-22} De Andrade, Bruno; Siracusa, Giovana; Viana, Arlúcio. A nonlinear fractional diffusion equation: well-posedness, comparison results, and blow-up. J. Math. Anal. Appl. \textbf{505} (2022), no.2, Paper No. 125524, 24 pp.
	
	\bibitem{FVR} L. C. F. Ferreira\ and\ E. J. Villamizar-Roa, Self-similar solutions, uniqueness and long-time asymptotic behavior for semilinear heat equations, Differential Integral Equations {\bf 19} (2006), no.~12, 1349--1370.
	
	\bibitem{Fe-V-10} Ferreira, Lucas C. F.; Villamizar-Roa, Elder J. On the existence of solutions for the Navier-Stokes system in a sum of weak-$L^p$ spaces. Discrete Contin. Dyn. Syst. \textbf{27} (2010), no.1, 171--183.
	
	\bibitem{Folland-book} G.~B. Folland, {\it Real analysis}, second edition, 
	Pure and Applied Mathematics (New York) A Wiley-Interscience Publication, , Wiley, New York, 1999.
	
	\bibitem{Fujita} H. Fujita, On the blowing up of solutions of the Cauchy problem for $u_t=\Delta u + u^{1+\alpha}$, J. Fac.	Sci. Univ. Tokyo Sect. 1 13 (1966), 109--124.
	
	
	\bibitem{Garofalo-Trallli-22} Garofalo, N., Tralli, G. Heat Kernels for a Class of Hybrid Evolution Equations. Potential Anal (2022). https://doi.org/10.1007/s11118-022-10003-2
	
	
	\bibitem{Gaveau-77} Gaveau, Bernard. Principe de moindre action, propagation de la chaleur et estimées sous elliptiques sur certains groupes nilpotents. Acta Math. \textbf{139} (1977), no.1--2, 95--153.
	
	%\bibitem{Giga-86} Giga, Yoshikazu. Solutions for semilinear parabolic equations in $L^p$ and regularity of weak solutions of the Navier-Stokes system. J. Differential Equations \textbf{62} (1986), no.2, 186--212.
	
	\bibitem{Goldstein-Kogoj} Goldstein, Gisele R.; Goldstein, Jerome A.; Kogoj, Alessia E.; Rhandi, Abdelaziz; Tacelli, Cristian. Instantaneous blowup and singular potentials on Heisenberg groups. Ann. Sc. Norm. Super. Pisa Cl. Sci. ( 5) \textbf{23} (2022), no.4, 1723--1748.
	
	\bibitem{Grushin} Grushin, V. V. A certain class of hypoelliptic operators.Mat. Sb. (N.S.) \textbf{83 (125)} (1970), 456--473.
	
	\bibitem{Kogoj-Lan-12} Kogoj, Alessia E.; Lanconelli, Ermanno. On semilinear ${\bf \Delta}_{\bf \lambda}$-Laplace equation. Nonlinear Anal. \textbf{75} (2012), no.12, 4637--4649.
	
	
	
	\bibitem{Lv-Wei-19} Lv, Guangying; Wei, Jinlong; Xie, Longjie. Blowup solutions of Grushin's operator. Appl. Math. Lett. \textbf{97} (2019), 20--26.
	
	
	\bibitem{Sogge} C.~D. Sogge, {\it Fourier integrals in classical analysis}, second edition, Cambridge Tracts in Mathematics, 210, Cambridge Univ. Press, Cambridge, 2017.
	
	\bibitem{Stempak-25} Stempak, Krzysztof. A transform for the {Grushin} operator with applications. Preprint, arXiv:2503.07073.
	
	\bibitem{Viana-19} A. Viana, A local theory for a fractional reaction-diffusion equation, Commun. Contemp. Math. {\bf 21} (2019), no.~6, 1850033, 26 pp.
	
	\bibitem{Weissler} F. B. Weissler, Existence and non-existence of global solutions for a semilinear heat equation, Israel J. Math. {\bf 38}, (1981), 29--40.
	
	
	\bibitem{Wu-15} Wu, Jang-Mei. Geometry of Grushin spaces. Illinois J. Math. \textbf{59} (2015), no.1, 21--41.		
		
		
		
		
\end{thebibliography}
\end{document}